\newtheorem{theorem}{Theorem}
\newtheorem{lemma}[theorem]{Lemma}
\newtheorem{proposition}[theorem]{Proposition}
\newtheorem{remark}[theorem]{Remark}
\newtheorem{example}[theorem]{Example}
\theoremstyle{definition}
\newtheorem{definition}[theorem]{Definition}
\newcommand{\X}{\mathbb{X}}
\newcommand{\Y}{\mathbb{Y}}
\newcommand{\signal}{x}
\newcommand{\data}{y}
\newcommand{\R}{\mathbb{R}}
\newcommand{\N}{\mathbb{N}}
\newcommand{\M}{\mathbf{M}}
\newcommand\snorm[1]{\lVert#1\rVert}
\newcommand\set[1]{{\{#1\}}}
\newcommand\norm[1]{{\Vert#1\Vert}}
\newcommand\inner[2]{\langle#1,#2\rangle}
\newcommand\abs[1]{\vert#1\vert}
\newcommand\hnorm[1]{\biggl\Vert#1\biggr\Vert}
\newcommand{\scp}[2]{\langle #1,#2\rangle}
\newcommand{\fourier}{\mathcal{F}}
\newcommand{\TT}{\mathcal{T}}
\newcommand{\skl}[1]{(#1)}
\newcommand{\Po}{\mathbf{P}}
\newcommand{\Ko}{\mathbf{K}}
\newcommand{\Bo}{\mathbf{B}}
\newcommand{\Freg}{\mathbf{F}}
\newcommand{\Ro}{\mathbf{R}}
\newcommand{\eps}{\epsilon}
\newcommand{\al}{\alpha}
\newcommand{\supp}{\operatorname{supp}}
\newcommand{\ran}{\operatorname{ran}}
\newcommand{\dom}{\operatorname{dom}}
\def\plus{{\boldsymbol{\ddag}}}
\newcommand{\la}{\lambda}
\newcommand{\La}{\Lambda}
\newcommand{\ka}{\kappa}
\newcommand{\ao}{{\boldsymbol{a}}}
\newcommand{\bom}{{\boldsymbol{\omega}}}
\newcommand{\kao}{{\boldsymbol{\kappa}}}
\newcommand{\vo}{{\boldsymbol{v}}}
\newcommand{\uo}{{\boldsymbol{u}}}
\newcommand{\buo}{{\boldsymbol{\bar{u}}}}
\newcommand{\U}{\mathbb{U}}
\newcommand{\edot}{\,\cdot\,}
\newcommand{\sph}{\mathbb{S}}
\colorlet{lred}{red!40}
\colorlet{lgreen}{green!40}
\colorlet{lblue}{blue!40}
\definecolor{bananamania}{rgb}{0.98, 0.91, 0.71}
\newcommand{\dt}{\operatorname{d}\!t}
\newcommand{\dxi}{\operatorname{d}\!\xi}
\newcommand{\dsigma}{\operatorname{d}\!\sigma}
\newcommand{\dtheta}{\operatorname{d}\!\theta}
\newcommand{\dx}{\operatorname{d}\!x}
\newcommand{\ds}{\operatorname{d}\!s}
\newcommand{\riesz}{\boldsymbol{\Omega}}
\newcommand{\Z}{\mathbb{Z}}
\numberwithin{equation}{section}
\numberwithin{figure}{section}
\numberwithin{theorem}{section}
\title{Regularization of Inverse Problems by Filtered Diagonal Frame Decomposition}
\author{Andrea Ebner\thanks{Department of Mathematics, University of Innsbruck,
Technikerstrasse 13, 6020 Innsbruck, Austria, \{andrea.ebner, johannes.schwab, markus.haltmeier\}@uibk.ac.at} \and J\"urgen Frikel \thanks{Department of Computer Science and Mathematics, Galgenbergstra{\ss}e 32, D-93053 Regensburg, Germany, juergen.frikel@oth-regensburg.de} \and Dirk Lorenz \thanks{Institute of Analysis and Algebra, Technical University of Braunschweig, Universit\"atsplatz 2, 38106 Braunschweig, Germany, d.lorenz@tu-braunschweig.de} \and Johannes Schwab \footnotemark[1] \and Markus Haltmeier\footnotemark[1]  }
\begin{document}

\maketitle

\begin{abstract}
Inverse problems are at the heart of many practical problems such as medical image reconstruction or non-destructive evaluation. A characteristic feature of inverse problems is their instability with respect to data perturbations. In order to stabilize the inversion process, regularization methods have to be developed and applied. In this paper, we introduce and analyze the concept of filtered diagonal frame decomposition, which extends the classical filtered singular value decomposition (or spectral filtering) to the case of frames. The use of frames as generalized singular systems allows for a better adaption to a given class of potential solutions of the inverse problem. This is also beneficial for problems where the SVD is not available analytically. We show that filtered diagonal frame decompositions provide convergent regularization methods. Moreover, we derive convergence rates under source conditions and prove order optimality when the frame under consideration is a Riesz basis. Our analysis applies to unbounded and bounded forward operators. As a practical application of our tools we study filtered diagonal frame decompositions for inverting the Radon transform as an unbounded operator on $L^2(\R^2)$.

\bigskip\noindent\textbf{keywords}
Inverse problems, frame decomposition, Moore-Penrose inverse, 
convergence analysis, convergence rates, Radon transform, computed tomography
\end{abstract}

\section{Introduction}
\label{sec:intro}

This paper is concerned with solving inverse problems of the form
\begin{equation}\label{eq:ip}
	\data  =  \Ko\signal+ z \,,
\end{equation}
where $\Ko\colon \dom(\Ko) \subseteq \X\to \Y$ is a closed densely defined linear operator between Hilbert spaces $\X$ and  $\Y$, and $z$ denotes the data distortion that satisfies $\norm{z} \leq \delta$ for some noise level $\delta\geq 0$. A characteristic property of inverse problems is that they are ill-posed \cite{engl1996regularization,scherzer2009variational}. This means that the solution of \eqref{eq:ip} is either not unique or is unstable with respect to  perturbations of the right-hand side.  Note that our treatment includes the case of unbounded forward operators. On the one hand this does not make proofs significantly more complicated than in the case of bounded forward operators, and on the other hand unbounded forward operators are important for practically relevant inverse problems. For example, the Radon transform is well known to be unbounded as an operator on $L^2(\R^2)$ which is the natural Hilbert space where wavelet frames are defined. Restricting  to functions vanishing outside a bounded domain would make the Radon transform bounded but would also require to adjust the underlying wavelets to the boundary. Further, on bounded domains,  main theoretical tools such as the Fourier slice identity are not directly applicable.

Arguably, the  theory of solving inverse problems of the form \eqref{eq:ip}
 is quite well developed.  Especially, the class of filter based methods 
gives a wide range of solution schemes. Assuming that $\Ko$ has a singular value decomposition (SVD) 
$\Ko
	 =
	\sum_{n \in \N}  \sigma_n   \inner{\edot}{u_n}
	v_n$,
	these methods take one of the following equivalent forms
 \begin{align} \label{eq:filtersvd1}
 	\Freg_\al \data
	& =
	\sum_{n \in \N} g_\al( \sigma_n^2 )  \inner{\Ko^* \data}{u_n}
	u_n  \\ \label{eq:filtersvd2}
	\Freg_\al \data 
	& =
	\sum_{n \in \N} f_\alpha( \sigma_n )
	\inner{ \data}{v_n}  u_n  \,.
\end{align}
Here  $(g_\al)_{\al >0}$  is a family  of bounded functions converging pointwise to $1/\lambda $ as $\alpha  \to 0$ and $f_\alpha(\sigma)
\coloneqq  \sigma g_\alpha( \sigma^2)$.    Note that the form \eqref{eq:filtersvd1} derives from functional calculus  applied to $ g_\al (\Ko^* \Ko)   \Ko^*$ whereas \eqref{eq:filtersvd2} can be naturally generalized to frame decompositions instead of an SVD. The form \eqref{eq:filtersvd2} can be seen as regularized version of the  SVD based   formula $\Ko^\plus  \data
	 =
	\sum_{n \in \N}
	\sigma_n^{-1}
	\inner{ \data }{v_n}  u_n $ for the  Moore-Penrose pseudo inverse $\Ko^\plus$ of $\Ko$. The analysis of such  regularization methods can be found, for example, in \cite{engl1996regularization,groetsch1984theory} in the case  of bounded $\Ko$; compare \cite{hofmann2009regularization} for the case of unbounded forward operators. For general background on pseudo inverses, see, for example,  \cite{ben2003generalized}. 

The SVD cannot be adapted to the underlying signal class and therefore  is not always a good representation for various kinds of inverse problems. Instead, certain diagonal frame decompositions generalizing the SVD are  better suited because  the defining  frames  can be adjusted to a particular application \cite{candes2002recovering,donoho95nonlinear,frikel2019sparse}. To the best of our knowledge, filter  based methods based on diagonal frame decompositions have not been rigorously studied in the context of regularization theory. (Note that after initial submission of our manuscript a related analysis appeared in \cite{hubmer2022regularization}. Most notably, opposed  to that paper, our analysis allows unbounded forward operators and considers order optimality and characterization  of ill-posedness via the frame decompositon. On the other hand, \cite{hubmer2022regularization}  additionally considers the discrepancy principle which we do not address.) This paper addresses  this issue and develops a regularization theory for diagonalizing systems including the SVD based filter methods as special case.

\subsection{Filtered diagonal frame decomposition}

A diagonal frame decomposition (DFD)  for the operator $\Ko$ consists of  a frame  $(u_\la)_{\la \in \La}$ of  $(\ker \Ko)^\perp$, a frame $(v_\la)_{\la \in \La}$ of $\overline{\ran \Ko}$ and a sequence of positive numbers $(\ka_\la)_{\la \in\La}$ such that the pseudo inverse  of $\Ko$ has the form (see Section~\ref{ssec:dfd})  
\begin{equation}\label{eq:pseudo} 
\forall \data \in \dom(\Ko^\plus) = \ran(\Ko) \oplus \ran(\Ko)^\bot \colon
\quad 
  \Ko^\plus \data
	 =
	\sum_{\la \in \La}
	\frac{1}{\ka_\la}
	\inner{ \data }{v_\la} \bar u_\la \,.
\end{equation}
Here $(\bar u_\la)_\la$ is any dual frame of  $(u_\la)_{\la \in \La}$ and  $\ka_\la > 0$ are the generalized singular values. Equation \eqref{eq:pseudo}  is a generalization of the SVD allowing frames as  non-orthogonal  generalized singular systems $(u_\la)_\la$ and
$(v_\la)_\la$. Moreover,  both systems are in general overcomplete, which is another main  reason for using frames. Opposed to the SVD, many different DFDs for a given operator can  exist and the quasi-singular systems can be adapted to a  particular signal class.

In the case of ill-posed problems where $\Ko^\plus$ is unbounded, regularization techniques have to be applied in order to approximately but stably solve \eqref{eq:ip}.  Based on a DFD of the forward operator, in this paper, we consider filtered DFDs defined as
\begin{equation*} 
 \Freg_\al \data  
  \coloneqq \sum_{\la \in \La}
 f_\al(\ka_\la) \inner{\data}{v_\la} \bar u_\la \,.
\end{equation*}
Here  $(f_\al)_{\al >0}$  is a family of functions converging pointwise to $1/\ka $ as $\alpha  \to 0$ (more precisely, a regularizing filter; see Definition~\ref{def:refF}).
In case  we take the SVD as the DFD then the filtered DFD reduces to classical filter based regularization.  However,  the filtered DFD contains other interesting special cases. In particular, taking $(u_\la)_{\la \in \La}$ as  wavelet, curvelet and shearlet system yields DFDs for  image reconstruction   \cite{candes2002recovering,colonna2010radon,donoho95nonlinear,frikel2019sparse}.
We also point out that such systems are often used in variational regularization schemes \cite{daubechies2004iterative,dicken1996wavelet,elad2017analysis,grasmair2008sparse,grasmair2011necessary,lorenz2008convergence,rieder1997wavelet} which are related but different from the approach followed in this paper. In the context of variational regularization, regularized solutions are constructed as minimizers of a generalized Tikhonov functional formed by adding a frame-dependent regularizer to the operator-dependent data fitting term.

\subsection{Outline}

In Section \ref{sec:dfd} we introduce and study the concept of a DFD and  relate   the ill-posedness of the inverse problem \eqref{eq:ip} to the decay of the quasi-singular values. In  Section \ref{sec:reg} we introduce filtered DFDs to account for the ill-posedness of \eqref{eq:ip}. We show that filtered DFDs yield regularization methods and we derive convergence rates  under  source-type conditions on the unknowns to be recovered. In Section \ref{sec:radon} we present and implement filtered DFDs for  stable  Radon transform  inversion  as  practically relevant example from  medical image reconstruction. The paper concludes with  a short discussion and outlook given  in Section \ref{sec:end}.

\section{Operator inversion by diagonal frame decomposition}
\label{sec:dfd}

Throughout this paper $\X$ and $\Y$ denote Hilbert spaces over $\mathbb{K} \in \{\R, \mathbb{C} \}$  and $\Ko \colon  \dom(\Ko) \subseteq  \X \to \Y$ a closed, densely defined linear operator. Note that we do not assume the operator $\Ko$ to be  bounded. For example, this allows to include the Radon transform on $L^2(\R^2)$ in our setting; see Section~\ref{sec:radon}.
In this section, we introduce diagonal frame decompositions (DFDs) which
in the following sections will be used to regularize the inverse problem defined by the forward operator $\Ko$.

\subsection{Frames}

We start by briefly recalling some basic facts about frame theory \cite{Ch02,mallat2009wavelet,adcock2019frames}.
A family $\uo = (u_\la)_{\la \in \La} \in \U^\La$ where $\La$ is an at most countable index set is called frame for the  Hilbert space $\U$ if there are constants $A,  B>0$ such that
\begin{equation}
\forall x \in \U \colon \quad A  \norm{x}^2 \leq \sum_{\la \in \La} \abs{\inner{u_\la}{x}}^2 \leq B \norm{x}^2 \,.
\end{equation}
The constants $A$ and $B$ are called lower  and upper frame bounds of $\uo$, respectively. The frame is called tight if $A = B$ and exact if  it fails to be a frame whenever any single element is deleted from the sequence $(u_\la)_{\la \in \La}$. A frame that is not a Riesz basis is said to be overcomplete.

\begin{definition}[Analysis and synthesis operator]
Let $\uo = (u_\la)_{\la \in \La}$  be a frame for the Hilbert space $\U$.
The analysis and synthesis operator of $\uo$, respectively, are  defined by
\begin{align}
&\TT_\uo \colon \U \to \ell^2(\La): x \mapsto (\inner{x}{u_\la})_{\la \in \La}\\
&\TT^*_\uo \colon \ell^2(\La) \to \U \colon (c_\la)_{\la \in \La} \mapsto \sum_{\la \in \La} c_\la u_\la \,.
\end{align}
\end{definition}

One easily  verifies  that $\TT_\uo$ and $\TT_\uo^*$ are linear bounded operators and the   synthesis operator  $\TT_\uo^*$ is the adjoint of the analysis operator $\TT_\uo$.

\begin{definition}[Dual frame]
Let $\uo = (u_\la)_{\la \in \La}$ be a frame for the Hilbert space $\U$. A frame $\buo = (\bar u_\la)_{\la \in \La}$ for $\U$ is called a dual frame of $\uo$ if the following duality condition holds:
\begin{equation}\label{eq:dualF}
\forall x \in \U \colon \quad x=\sum_{\la \in \La} \inner{x}{u_\la} \bar u_\la = \TT_{\buo}^* \TT_\uo x \,.
\end{equation}
\end{definition}
Every frame has at least one dual frame and if the frame $\uo$ is over-complete, then there exist infinitely many dual frames of $\uo$.

\begin{definition}[Norm bounded frames]
Let $\U$ be a Hilbert space and $\uo$ a frame for $\U$. We call $\uo$ norm bounded from below if there exists a constant $a>0$ such that $\inf_{\la \in \La} \norm{u_\la} \geq a$.
\end{definition}

Note that every frame is already norm  bounded from above. In fact, the upper frame condition implies $\norm{u_\la}^4 = \abs{\inner{u_\la}{u_\la}}^2 \leq \sum_{\mu \in \La} \abs{\inner{u_\la}{u_\mu}}^2 \leq B \norm{u_\la}^2 $ which gives  $\sup_{\la \in \La} \norm{u_\la} \leq \sqrt{B}$.
On the other hand one easily constructs  examples of frames that are not norm bounded from below.

\subsection{Diagonal frame decomposition}
\label{ssec:dfd}

We use the  following notion extending the wavelet-vaguelette decomposition (WVD) and biorthogonal  curvelet decomposition to more general frames. It will allow  us to unify  and extend existing filter based regularization methods to the frame case.

\begin{definition}[Diagonal frame decomposition, DFD] \label{def:dfd}
Let $\Ko \colon \dom(\Ko) \subseteq \X \to \Y$ be a closed and densely defined linear operator, and $\La$ an at most countable index set. We call $(\uo, \vo, \kao) = (u_\la, v_\la, \ka_\la)_{\la \in \La}$ a diagonal frame decomposition (DFD) for the operator $\Ko$ if the following holds:

\begin{enumerate}[itemindent =2em, leftmargin =2em,  label=(D\arabic*)]
\item\label{D1}  $(u_\la)_{\la \in \La}$ is a frame for $(\ker{\Ko})^{\perp} \subseteq \X$,
\item\label{D2}  $(v_\la)_{\la \in \La}$ is a frame for $\overline{\ran\Ko}\subseteq \Y$,
\item\label{D3}  $(\kappa_\la)_{\la \in \La}\in (0, \infty)^\La$ satisfies the quasi-singular relations
\begin{equation} \label{eq:qsr}
\forall \la \in \La \colon \quad \Ko^* v_\la = \ka_\la u_\la \,.
\end{equation}
\end{enumerate}
We call $(\ka_\la)_{\la \in \La}$ the quasi-singular values and $(u_\la)_{\la \in \La}$, $(v_\la)_{\la \in \La}$  the   corresponding quasi-singular systems. 
\end{definition}

In the case $\uo$  is an orthonormal wavelet  basis, then   the DFD reduces to the WVD  introduced in  \cite{donoho95nonlinear}.  A WVD decomposition has been constructed for the classical computed tomography modeled by  the two-dimensional Radon transform see \cite{donoho95nonlinear}. In the case of the two-dimensional Radon transform, a  biorthogonal curvelet decomposition was constructed in \cite{candes2002recovering}. In \cite{colonna2010radon}, the authors derived biorthogonal shearlet decompositions for two- and three-dimensional Radon transforms. The  limited data case has been studied in \cite{frikel2013sparse}. 

 Note that the quasi-singular relations in \eqref{eq:qsr}  imply that $v_\la  \in \dom(\Ko^*)$ and $u_\la  \in \ran(\Ko^*)$ which in the unbounded case are abstract smoothness requirements. Interestingly,  opposed to the SVD case, a DFD does not  require $v_\la \in \ran (\Ko)$ in general.

\begin{remark}[DFDs in the ONB case]
Consider the special case where  $\uo$ is an orthonormal basis (ONB) and let $(\bar v_\la)_{\la}$ be a dual frame of  $\vo$. The quasi-singular relations in this case imply $\Ko u_\la = \ka_\la \bar v_\la$ and thus $\bar v_\la \in \ran(\Ko)$ and $u_\lambda \in \dom(\Ko)$ for all $\la \in \La$.  Further, one can also check that the frames $\vo$ and $\bar \vo$ are biorthogonal, $\inner{v_\la}\bar v_\mu= \delta_{\la \mu}$.This  in turn implies that $\vo$ is a Riesz basis (see \cite{Ch02}) and that $\bar\vo$ is the dual Riesz basis uniquely determined by $\vo$. 
\end{remark}

\begin{remark}[Multiplication operators on $\ell^2$]
For  any sequence $ \ao = (a_\la)_{\la \in \La} \in \R^\La$ define the pointwise multiplication operator
\begin{equation*}
\M_\ao \colon   \dom(\M_\ao) \subseteq \ell^2(\La)  \to \ell^2(\La) \colon (c_\la)_{\la \in \La}  \mapsto (a_\la c_\la)_{\la \in \La} 
\end{equation*}
with domain $\dom(\M_\ao) \coloneqq \{(c_\la)_{\la \in \La} \in \ell^2(\La)\mid (a_\la c_\la)_{\la \in \La} \in \ell^2(\La) \}$. Then  $\M_\ao$ is  closed and densely defined, and bounded if and only  if $ \ao$ is bounded.
\end{remark}

\begin{remark}[DFD as frame-based factorization]
Let $(\uo, \vo, \kao)$ be a DFD for  $\Ko$. Then \eqref{eq:qsr} is  equivalent to   $ \inner{\Ko^* v_\la}{x}  = \ka_\la \inner{u_\la}{x}$ for all $ \lambda \in \La$  and all $x \in \dom(\Ko)$. Moreover,   $\ran (\TT_\uo|_{\dom(\Ko)})  =  \{ (\inner{u_\la}{x})_{\la \in \La} \mid x \in \dom(\Ko) \} \subseteq \dom(\M_\kao)$.  Hence   \eqref{eq:qsr} is equivalent to  $\TT_\vo \Ko = \M_\ka \TT_\uo |_{\dom(\Ko)}$.
\end{remark}

\begin{remark}[Moore-Penrose inverse] 
Recall that  $\Ko$ is closed and densely defined but potentially unbounded. For such operators, the Moore-Penrose inverse  $\Ko^\plus \colon \dom(\Ko^\plus)  \subseteq \Y  \to \X $ with $ \dom(\Ko^\plus) \coloneqq \ran(\Ko)  \oplus  \ran(\Ko)^\bot$ is defined as in the case of bounded forward operators, and is  closed with dense domain  \cite[Theorem~2.12]{groetsch2006stable}. For $\data \in \dom(\Ko^\plus)$,   $\Ko^\plus \data$ is uniquely characterized either as the unique solution of $\Ko x =  \Po_{\overline{\ran(\Ko)}} \data$  in  $\dom(\Ko) \cap \ker(\Ko)^\bot$ or the unique least-squares solution of $\Ko x =  \data$ having  minimal norm.   
\end{remark}

\begin{theorem}[Moore-Penrose inverse via DFD]\label{thm:DFDinvPinv}
Let $(\uo,\vo,\kao)$ be a DFD for $\Ko$ and $\buo = (\bar u_\la)_{\la \in \La}$ be a dual frame of $\uo$. Then 
\begin{equation} \label{eq:DFDinvPinv}
\forall  y \in \dom(\Ko^\plus) \colon
\quad 	
         \Ko^\plus  \data
    = 	\sum_{\la \in \La}
	\frac{1}{\ka_\la}
	\inner{ \data }{v_\la} \bar u_\la \,.
\end{equation}
Equivalentely, $\Ko^\plus = \TT_{\buo}^* \M_{1/\kao} \TT_\vo |_{\dom(\Ko^\plus)} $ where $1/\kao$ denotes the pointwise inverse of $\kao$.  
\end{theorem}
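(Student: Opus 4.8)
The plan is to reduce the identity to a computation of frame coefficients. Write $x^\plus \coloneqq \Ko^\plus \data$. The two structural facts about the pseudo-inverse that I would invoke are that $x^\plus \in (\ker \Ko)^\perp$ and that $\Ko x^\plus = P \data$, where $P$ denotes the orthogonal projection onto $\overline{\ran \Ko}$; both hold for $\data \in \dom(\Ko^\plus)$ and in particular $x^\plus \in \dom(\Ko) \cap (\ker \Ko)^\perp$. Since by (D1) the family $\uo$ is a frame for $(\ker \Ko)^\perp$ and $\buo$ is a dual frame, the duality relation \eqref{eq:dualF} applies to $x^\plus$ and gives $x^\plus = \sum_{\la \in \La} \sinner{x^\plus}{u_\la} \bar u_\la$. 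Hence it suffices to show that the frame coefficients satisfy $\sinner{x^\plus}{u_\la} = \sinner{\data}{v_\la}/\ka_\la$ for every $\la \in \La$.

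Next I would compute these coefficients using the quasi-singular relation \eqref{eq:qsr}. Since $\ka_\la > 0$ and $\Ko^* v_\la = \ka_\la u_\la$, we have
\begin{equation*}
\ka_\la \sinner{x^\plus}{u_\la} = \sinner{x^\plus}{\Ko^* v_\la} = \sinner{\Ko x^\plus}{v_\la} .
\end{equation*}
The adjoint pairing is legitimate because $v_\la \in \dom(\Ko^*)$ by (D3) and $x^\plus \in \dom(\Ko)$. Substituting $\Ko x^\plus = P \data$ and using that $P$ is self-adjoint with $P v_\la = v_\la$ (as $v_\la \in \overline{\ran \Ko}$ by (D2)) yields $\sinner{\Ko x^\plus}{v_\la} = \sinner{P \data}{v_\la} = \sinner{\data}{v_\la}$. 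Dividing by $\ka_\la$ gives the claimed coefficient identity, and inserting it into the dual-frame expansion of $x^\plus$ produces \eqref{eq:DFDinvPinv}.

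For the operator form I would simply read off the three factors: $\TT_\vo \data = (\sinner{\data}{v_\la})_{\la \in \La}$, applying $\M_{1/\kao}$ rescales this to $(\sinner{\data}{v_\la}/\ka_\la)_{\la \in \La}$, and $\TT_{\buo}^*$ synthesizes the result as $\sum_{\la \in \La} \sinner{\data}{v_\la} \bar u_\la / \ka_\la$, which is exactly $\Ko^\plus \data$. Convergence of this series is not a separate concern: the coefficient sequence coincides with $\TT_\uo x^\plus$, which lies in $\ell^2(\La) = \dom(\TT_{\buo}^*)$ because $\TT_\uo$ is bounded, so the synthesis operator may indeed be applied.

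The step I expect to be the main obstacle is the careful justification of the two pseudo-inverse properties in the possibly unbounded setting, namely that $x^\plus \in \dom(\Ko) \cap (\ker \Ko)^\perp$ and $\Ko x^\plus = P \data$. Once these are secured, everything else is a short manipulation; crucially, the interchange $\sinner{x^\plus}{\Ko^* v_\la} = \sinner{\Ko x^\plus}{v_\la}$ only needs $v_\la \in \dom(\Ko^*)$, which is precisely what the quasi-singular relation \eqref{eq:qsr} guarantees, so no additional regularity assumption on $\Ko$ is required.
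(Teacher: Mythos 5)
Your proposal is correct and takes essentially the same route as the paper's proof: both rest on the pseudo-inverse structure ($\signal^\plus \in (\ker\Ko)^\perp$ and $\Ko \signal^\plus$ being the projection of $\data$ onto $\overline{\ran\Ko}$), the quasi-singular relation \eqref{eq:qsr} to identify $\sinner{\data}{v_\la}$ with $\ka_\la\sinner{\signal^\plus}{u_\la}$, and the dual-frame expansion \eqref{eq:dualF}; the paper merely runs the same computation in the opposite direction, starting from the series and simplifying it to $\signal^\plus$. Your explicit attention to series convergence (the coefficients being $\TT_\uo \signal^\plus \in \ell^2(\La)$) and to $v_\la \in \dom(\Ko^*)$ are finer points the paper leaves implicit.
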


\begin{proof}
For any $\data \in \dom(\Ko^\plus) = \ran(\Ko) \oplus \ran(\Ko)^\perp$ define
$\Bo \data \coloneqq \sum_{\la \in \La}
 \ka_\la^{-1} \inner{ \data }{v_\la} \bar u_\la$. We will show that the mapping
$\Bo \colon \dom(\Ko^\plus) \subseteq \Y \to \X \colon \data \mapsto \Bo \data$ equals the Moore-Penrose inverse. For that purpose note that any element in $\dom (\Ko^\plus)$ has the unique representation  $\data = \Ko \signal^\plus + \data^\perp$ where
$\signal^\plus \in \ker(\Ko)^\perp \cap \dom(\Ko)$  and $\data^\perp \in \ran(\Ko)^\perp$.
The identity  $\ka_\la^{-1}
	\inner{ \data }{v_\la}  = \inner{ \signal^\plus }{u_\la} $ shows that $\Bo \data$ is well defined as absolutely convergent sum. Further,
\begin{multline}
\Bo y =
\sum_{\la \in \La}
	\frac{1}{\ka_\la}
	\inner{ \data }{v_\la} \bar u_\la
	=
	\sum_{\la \in \La}
	\frac{1}{\ka_\la}
	\inner{\Ko \signal^\plus }{v_\la} \bar u_\la	
	=
	\sum_{\la \in \La}
	\frac{1}{\ka_\la}
	\inner{  \signal^\plus }{\Ko^* v_\la} \bar u_\la	\\
	=
	\sum_{\la \in \La}
	\inner{\signal^\plus }{u_\la} \bar u_\la
	= \signal^\plus = \Ko^\plus  y \,.	
\end{multline}
Here we used the definition of $\Bo$, the fact  that $v_\la \in \overline{\ran(\Ko)}$, the   quasi-singular relation \eqref{eq:qsr}, and the fact that $\buo$ is a dual frame of $\uo$
for $(\ker{\Ko})^{\perp}$.
\end{proof}

\subsection{Ill-posedness and quasi  singular values}

Typical inverse problems are unstable in the sense that the Moore-Penrose inverse is unbounded. It is well known that the Moore-Penrose inverse of an operator having a SVD is  bounded if and only if  the singular values do not accumulate  at zero.  Below  we show that a similar characterization holds  for the quasi-singular values in a DFD.

\begin{theorem}[Characterization of ill-posedness via DFD]\label{thm:char}
Let $(\uo, \vo, \kao)$ be a DFD of $\Ko$. Then  the following assertions hold.
\begin{enumerate}
\item\label{thm:char1} $\inf_{\la \in \La}  \ka_\la >0$ $\Rightarrow$  $\Ko^\plus$ is bounded.
\item\label{thm:char2} $\vo$ norm bounded from below $\wedge$ $
\Ko^\plus \text{ bounded } \Rightarrow \, \inf_{\la \in \La}  \ka_\la >0$.\end{enumerate}
\end{theorem}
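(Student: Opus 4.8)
Both implications can be read off from the inversion formula $\Ko^\plus = \TT_{\buo}^* \M_{1/\kao} \TT_\vo$ of Theorem~\ref{thm:DFDinvPinv}, together with the fact that the frame operators $\TT_\vo$ and $\TT_{\buo}^*$ are bounded. For part~\ref{thm:char1}, I would argue that the only potentially unbounded factor in the product $\TT_{\buo}^* \M_{1/\kao} \TT_\vo$ is the diagonal operator $\M_{1/\kao}$, whose operator norm on $\ell^2(\La)$ equals $\sup_{\la \in \La} \ka_\la^{-1} = (\inf_{\la \in \La} \ka_\la)^{-1}$. Under the hypothesis $\inf_{\la \in \La} \ka_\la > 0$ this is finite, so $\M_{1/\kao}$ is bounded and hence so is the composition. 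Writing $B_\vo$ and $B_{\buo}$ for upper frame bounds, this yields the quantitative estimate $\norm{\Ko^\plus \data} \le \sqrt{B_{\buo} B_\vo}\,(\inf_\la \ka_\la)^{-1}\norm{\data}$ for every $\data \in \dom(\Ko^\plus)$.

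Part~\ref{thm:char2} is the substantive direction and the place where the hypothesis that $\vo$ is norm bounded from below enters. The plan is to convert the boundedness of $\Ko^\plus$ into a coercivity estimate for $\Ko^*$ on $\overline{\ran \Ko}$ and then to test this estimate against the frame elements $v_\la$. Concretely, boundedness of $\Ko^\plus$ forces $\ran \Ko$ to be closed, so each $v_\la \in \overline{\ran \Ko}$ in fact lies in $\ran \Ko \subseteq \dom(\Ko^\plus)$. Setting $\signal_\la \coloneqq \Ko^\plus v_\la \in (\ker \Ko)^\perp$, we obtain $\Ko \signal_\la = v_\la$ and $\norm{\signal_\la} \le \norm{\Ko^\plus}\,\norm{v_\la}$. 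The identity $\norm{v_\la}^2 = \inner{v_\la}{\Ko \signal_\la} = \inner{\Ko^* v_\la}{\signal_\la}$ followed by Cauchy--Schwarz then gives the coercivity bound $\norm{\Ko^* v_\la} \ge \norm{v_\la}/\norm{\Ko^\plus}$.

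Finally I would insert the quasi-singular relation $\Ko^* v_\la = \ka_\la u_\la$ to rewrite this as $\ka_\la\,\norm{u_\la} \ge \norm{v_\la}/\norm{\Ko^\plus}$, and then dispose of the two remaining norms: $\norm{u_\la} \le \sqrt{B_\uo}$ because every frame is norm bounded from above, while $\norm{v_\la} \ge a > 0$ by the standing assumption that $\vo$ is norm bounded from below. This produces the uniform lower bound $\ka_\la \ge a/(\sqrt{B_\uo}\,\norm{\Ko^\plus})$ for all $\la \in \La$, and hence $\inf_{\la \in \La} \ka_\la > 0$.

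The main obstacle is the coercivity step of the second part: one must see that boundedness of $\Ko^\plus$ (equivalently, closedness of $\ran \Ko$) transfers to a uniform lower bound for $\Ko^*$ precisely on the subspace $\overline{\ran \Ko}$ in which the $v_\la$ live, so that the quasi-singular relation can be exploited. The norm-bounded-below hypothesis on $\vo$ is exactly what is needed here, since it prevents $\norm{v_\la} \to 0$ from masking a subsequence of vanishing quasi-singular values; without it the converse implication genuinely fails.
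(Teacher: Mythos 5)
Your proof is correct, and while your part~(a) coincides with the paper's argument, your part~(b) takes a genuinely different route. For (a), the paper proves the same bound $\snorm{\Ko^\plus} \le \sqrt{B_{\buo} B_\vo}\,(\inf_{\la \in \La}\ka_\la)^{-1}$ by exactly the termwise version of your factorization argument, so the two are identical in substance. For (b), the paper argues by contradiction: assuming $\inf_{\la}\ka_\la = 0$, the norm-lower-bound on $\vo$ implies that $(\ka_\la^{-1}v_\la)_{\la \in \La}$ admits no upper frame bound, so some $y \in \ran \Ko$ violates the frame inequality at level $B = \snorm{\TT_{\buo}^\plus}^2 \snorm{\Ko^\plus}^2$, and the lower estimate $\snorm{\TT_{\buo}^*c} \ge \snorm{c}/\snorm{\TT_{\buo}^\plus}$, applied to $c = (\inner{y}{\ka_\la^{-1}v_\la})_{\la \in \La}$, then forces $\norm{\Ko^\plus y} > \snorm{\Ko^\plus}\norm{y}$. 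Your argument is instead direct: closedness of $\ran\Ko$ places each $v_\la$ in $\dom(\Ko^\plus)$, the identity $\norm{v_\la}^2 = \inner{\Ko^* v_\la}{\Ko^\plus v_\la}$ (valid since $\Ko \Ko^\plus v_\la = \Po_{\overline{\ran\Ko}}\,v_\la = v_\la$) together with Cauchy--Schwarz gives $\norm{\Ko^* v_\la} \ge \norm{v_\la}/\snorm{\Ko^\plus}$, and the quasi-singular relation plus $\norm{u_\la} \le \sqrt{B_\uo}$ and $\norm{v_\la} \ge a$ yields the explicit bound $\ka_\la \ge a/(\sqrt{B_\uo}\,\snorm{\Ko^\plus})$; the divisions are harmless because $\norm{v_\la}\ge a>0$ rules out degeneracies. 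Your route buys two things. First, it gives a quantitative lower bound for $\inf_\la \ka_\la$ in terms of $a$, $B_\uo$ and $\snorm{\Ko^\plus}$, rather than a bare contradiction. Second, it never uses the dual frame $\buo$, and this is a genuine advantage: the paper's inequality $\snorm{\TT_{\buo}^*c} \ge \snorm{c}/\snorm{\TT_{\buo}^\plus}$ holds only for coefficient sequences orthogonal to $\ker \TT_{\buo}^*$, and the sequences $c = \TT_\uo\signal^\plus$ arising in the paper's proof have this property when $\buo$ is the canonical dual (where $\TT_{\buo}^\plus = \TT_\uo^*$) but not for an arbitrary dual frame, so that step needs a small repair in general (for instance replacing $\snorm{\TT_{\buo}^\plus}$ by $\norm{\TT_\uo}$, using $\norm{\TT_{\buo}^*\TT_\uo\signal^\plus} = \norm{\signal^\plus} \ge \norm{\TT_\uo\signal^\plus}/\norm{\TT_\uo}$). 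Your argument sidesteps this issue entirely, so it is not only different but also somewhat more robust.
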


\begin{proof}
 \ref{thm:char1}
  Let $\buo$ be a dual frame of $\uo$. Then, for  every $\data \in \dom (\Ko^\plus)$ we have
\begin{multline*}
\norm{\Ko^\plus y}^2= \norm{\sum_{\la \in \La} \ka_\la^{-1} \inner{y}{ v_\la } \bar u_\la}^2 \leq
\snorm{\TT_{\buo}^*}^2 \sum_{\la \in \La} \abs{\ka_\la^{-1} \inner{y}{ v_\la }}^2
\\
\leq \frac{\snorm{\TT_{\buo}^*}^2 }{(\inf_{\la \in \La}  \ka_\la)^2} \sum_{\la \in \La} \abs{\inner{y}{ v_\la }}^2
\leq \frac{\snorm{\TT_{\buo}^*}^2 \snorm{\TT_{\vo}}^2}{(\inf_{\la \in \La}  \ka_\la)^{2}} \norm{\data}^2,
\end{multline*}
which implies  $\Ko^\plus$ is bounded.

 \ref{thm:char2}
Let $\Ko^\plus$ be bounded with norm $\snorm{\Ko^\plus}$ and suppose  $\inf_{\la \in \La}  \ka_\la = 0$.
Then the family $(\ka_\la^{-1} v_\la)_{\la \in \La}$ has no upper frame bound.
This can be shown by contradiction: Suppose it has an upper frame bound $B$ we know that $\sup_{\la \in \La}\snorm{\ka_\la^{-1} v_\la} \leq \sqrt{B}$, but since $\vo$ is norm bounded from below we have $\sup_{\la \in \La}\snorm{\ka_\la^{-1} v_\la}=\infty$.
Hence we have that for all constants $B > 0$ there exists $y \in \overline{\ran \Ko}$ such that
\begin{equation} \label{contr}
\sum_{\la \in \La} \abs{\inner{y}{\ka_\la^{-1} v_\la}}^2 > B \norm{\data}^2.
\end{equation}
Now choose $B=\snorm{\TT_{\buo}^\plus}^2 \snorm{\Ko^\plus}^2$, where $\buo$ is an arbitrary dual frame of $\uo$, and let  $y$ be such that \eqref{contr} is satisfied.
It is well known that if $\Ko^\plus$ is bounded, $\Ko$ has closed range \cite{groetsch2006stable}. Thereby, $y \in \dom (\Ko^\plus)$. Moreover, it has the unique representation $y=\Ko \signal^\plus$ with $\signal^\plus \in \ker(\Ko)^\perp\cap \dom(\Ko)$ and by $\inner{\Ko \signal^\plus}{\ka_\la^{-1} v_\la} = \inner{\signal^\plus}{u_\la}$ follows that $(\inner{y}{\ka_\la^{-1} v_\la})_{\la \in \La} \in \ell^2(\La)$.
Then we have
\begin{multline*}
\norm{\Ko^\plus y}^2 = \norm{\sum_{\la \in \La} \ka_\la^{-1}\inner{y}{ v_\la} \bar u_\la}^2
\geq \frac{1}{\snorm{\TT_{\buo}^\plus}^2} \sum_{\la \in \La} \abs{\inner{y}{\ka_\la^{-1} v_\la}}^2 \\
> \frac{1}{\snorm{\TT_{\buo}^\plus}^2} B \norm{\data}^2 = \snorm{\Ko^\plus}^2 \norm{\data}^2 \,, 
\end{multline*}
which leads to a contradiction.
\end{proof}

Compact operators with infinite dimensional range are typical examples of linear operators with non-closed range. Moreover, the spectral theorem for compact operators states that zero is the only accumulation point of the singular values $(\sigma_\la)_{\la \in \La}$. This means that we can find a bijection $\pi \colon  \N \to \La$ such that $(\ka_{\pi(n)})_{n \in \N}$ is a decreasing null-sequence. Below we show that the same holds for a DFD if $\uo$ is norm bounded from below. 

\begin{theorem}[Quasi-singular values for compact operators]
Suppose that $\Ko\colon \X \to \Y$ is a compact linear operator and assume that $(\uo, \vo, \kao)$ is a DFD for $\Ko$, where $\uo$ is norm bounded from below. Then, zero is the only accumulation point of $\kao$.
\end{theorem}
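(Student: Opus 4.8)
The plan is to prove the equivalent and slightly sharper statement that for every $\eps > 0$ the level set $\La_\eps \coloneqq \set{\la \in \La \mid \ka_\la \geq \eps}$ is finite; this rules out any nonzero accumulation point, since a positive accumulation point $c$ would force infinitely many of the $\ka_\la$ to lie above $c/2$, making $\La_{c/2}$ infinite. As preparation I would record two elementary norm bounds: because $\uo$ is norm bounded from below there is $a > 0$ with $\norm{u_\la} \geq a$ for all $\la$, and because $\vo$ is a frame the upper frame condition yields a constant $B$ with $\norm{v_\la} \leq \sqrt{B}$ for all $\la$ (exactly the argument already used in the excerpt to bound frame elements from above).

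Arguing by contradiction, I would suppose that $\La_\eps$ is infinite for some $\eps > 0$ and pick a sequence of pairwise distinct indices $(\la_k)_{k \in \N}$ in $\La_\eps$. The key observation is that the associated frame vectors $(v_{\la_k})_k$ converge weakly to $0$. Indeed, for any $y \in \Y$ the upper frame bound of $\vo$ gives $\sum_{\la \in \La} \abs{\inner{y}{v_\la}}^2 \leq B \norm{y}^2 < \infty$, and since the $\la_k$ are distinct the subseries $\sum_k \abs{\inner{y}{v_{\la_k}}}^2$ converges as well; hence $\inner{y}{v_{\la_k}} \to 0$, i.e. $v_{\la_k} \rightharpoonup 0$.

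Next I would invoke compactness. Since $\Ko$ is compact, so is its adjoint $\Ko^*$, and a compact operator maps weakly convergent sequences to norm convergent ones; therefore $\Ko^* v_{\la_k} \to 0$ in $\X$. On the other hand the quasi-singular relation \eqref{eq:qsr} gives $\Ko^* v_{\la_k} = \ka_{\la_k} u_{\la_k}$, whence $\norm{\Ko^* v_{\la_k}} = \ka_{\la_k}\norm{u_{\la_k}} \geq \eps\, a > 0$ for every $k$, contradicting convergence to $0$. Thus each $\La_\eps$ is finite, and zero is the only accumulation point of $\kao$.

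I expect the main obstacle to be the middle step — showing that distinct frame vectors are weakly null — since this is precisely where the frame structure (through its upper bound) and the countability of $\La$ enter; the remainder is a routine combination of the quasi-singular relation with the standard fact that compact operators turn weak convergence into strong convergence. I would also stress that the lower norm bound on $\uo$ is indispensable: without it the quantity $\ka_{\la_k}\norm{u_{\la_k}}$ could tend to $0$ even while $\ka_{\la_k}$ stays bounded away from $0$, and the final contradiction would collapse.
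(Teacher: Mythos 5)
Your proof is correct, but it takes a genuinely different route from the paper's. The paper argues by contradiction on a subsequence $(\ka_{n(k)})_k$ with $\inf_k \ka_{n(k)} = c > 0$: it normalizes to get the bounded sequence $(v_{n(k)}/\ka_{n(k)})_k$, applies sequential compactness of $\Ko^*$ to extract a further subsequence along which $u_{n(k(\ell))} = \Ko^*\bigl(v_{n(k(\ell))}/\ka_{n(k(\ell))}\bigr)$ converges strongly to some $x$, uses the lower norm bound on $\uo$ to ensure $x \neq 0$, and then derives the contradiction $\sum_{\la} \abs{\inner{x}{u_\la}}^2 = \infty$ against the \emph{upper frame bound of} $\uo$. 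You instead show that each level set $\set{\la \in \La \mid \ka_\la \geq \eps}$ is finite, using that pairwise distinct frame vectors are weakly null (a consequence of the upper frame bound of $\vo$), that compact operators map weakly convergent sequences to norm convergent ones, and the quasi-singular relation combined with the lower norm bound of $\uo$; your contradiction lands directly on $\norm{\Ko^* v_{\la_k}} = \ka_{\la_k}\norm{u_{\la_k}} \geq \eps a > 0$. This buys several things: no subsequence extraction, no need for the paper's estimate $2\inner{u_{n(k(\ell))}}{x} > \norm{u_{n(k(\ell))}}^2 + \norm{x}^2 - \eps$, a slightly stronger quantitative conclusion (only finitely many quasi-singular values exceed any $\eps$, which also disposes of $\infty$ as an accumulation point without special mention), and the structural observation that the frame property of $\uo$ itself is never used---only its norm lower bound. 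Two minor points to tighten: $\vo$ is a frame for $\overline{\ran \Ko}$, not for all of $\Y$, so for arbitrary $y \in \Y$ you should first note $\inner{y}{v_\la} = \inner{\Po_{\overline{\ran \Ko}}\, y}{v_\la}$ before invoking the upper frame bound (harmless, since every $v_\la$ lies in that subspace); and the recorded bound $\norm{v_\la} \leq \sqrt{B}$ is never actually needed, as the weak-to-norm property of compact operators requires no separate boundedness hypothesis.
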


\begin{proof}
Without loss of generality  consider the case $\La = \N$.
 Aiming for a contradiction,    we assume that  $\kao$ has an  accumulation point different from zero ($\infty$ is allowed). Therefore we can find a subsequence $(\ka_{n(k)})_{k \in \N}$ with  $\inf_{k \in \N} \ka_{n(k)} := c >0$.
Consequently $\snorm{v_{n(k)} / \ka_{n(k)} } \leq c^{-1} \sqrt{B_\vo}$, where $B_\vo$ is the upper frame bound of $\vo$. In particular, the sequence $(v_{n(k)}/\ka_{n(k)})_{k\in \N}$ is bounded.
Because  $\Ko^*$ is  compact, there exists another subsequence $(v_{n(k(\ell))}/\ka_{n(k(\ell))})_{k \in \N}$ such that $u_{n(k(\ell))} = \Ko^* (v_{n(k(\ell))}/\ka_{n(k(\ell))})$ strongly converges  to some  $x \in \ran (\Ko)^* \subseteq \ker(\Ko)^\perp$. Because  $\uo$ is norm bounded from below we have  $x \neq 0$. Choose $\eps > 0 $ such that $\snorm{x}^2 \geq 2\eps$. Since $u_{n(k(\ell))} \to x$ we can choose $N \in \N$ such that $\forall \ell \geq N \colon  \snorm{u_{n(k(\ell))}-x}^2 < \eps$. From this it follows $2\operatorname{Re}(\inner{u_{n(k(\ell))}}{x}) > \snorm{u_{n(k(\ell))}}^2 + \norm{x}^2 - \eps >  \eps$.
Consequently,
\begin{align*}
\sum_{n \in \N} \abs{\inner{\signal}{u_n}}^2 \geq \sum_{\ell = N}^\infty \abs{\inner{\signal}{u_{n(k(\ell))}}}^2 \geq \sum_{\ell = N}^\infty \frac{\eps^2}{4} = \infty.
\end{align*}
This contradicts the frame condition of $\uo$.
\end{proof}

If $\uo$ is not norm bounded from below, $(\ka_\la)_{\la \in \La}$ can have one or more accumulation points as the following elementary example shows. Note that this example is not intended as representative forward operator we are interested in, but rather indicates to be careful when the frames are not bounded from below.

\begin{example}\label{ex:HP}
Let $\X = \Y = \ell^2(\N)$ and consider the diagonal multiplication operator  $
\Ko \colon \ell^2(\N) \to \ell^2(\N) \colon  (x_i)_{i \in \N} \mapsto \left(x_i/\sqrt{i+1}\right)_{i \in \N}$.
Clearly $\Ko$ is  self-adjoint and compact with SVD given
by $( (e_i)_{i\in \N},   (e_i)_{i\in \N}, (1/\sqrt{i+1})_{i\in \N})$ where $(e_i)_{i\in \N}$ denotes the  standard basis of $\ell^2(\N)$.
Define
\begin{align*}
\uo & \coloneqq \Bigl( e_0, e_0 \mid \frac{e_1}{\sqrt{2}},\frac{e_1}{\sqrt{2}},\frac{e_1}{\sqrt{2}} \mid \frac{e_2}{\sqrt{3}},\frac{e_2}{\sqrt{3}},\frac{e_2}{\sqrt{3}},\frac{e_2}{\sqrt{3}} \mid \ldots \Bigr)
 \\
\vo & \coloneqq \Bigl( e_0, e_0  \mid e_1,\frac{e_1}{\sqrt{2}},\frac{e_1}{\sqrt{2}} \mid ,e_2, \frac{e_2}{\sqrt{3}},\frac{e_2}{\sqrt{3}},\frac{e_2}{\sqrt{3}} \mid  \ldots \Bigr)
\\
\kao& \coloneqq  \Bigl(  1, 1 \mid 1, \frac{1}{\sqrt{2}},\frac{1}{\sqrt{2}} \mid  1, \frac{1}{\sqrt{3}},\frac{1}{\sqrt{3}},\frac{1}{\sqrt{3}} \mid \ldots \Bigr)  \,.
\end{align*}
For $x \in \ker(\Ko)^\perp = \X$ and    $y \in \overline{\ran(\Ko)} = \Y$ we have
\begin{align*}
\sum_{\la \in \La} \abs{\inner{\signal}{u_\la}}^2
&=  \sum_{n \in \N} (n+2) \abs{\inner{\signal}{\frac{e_n}{\sqrt{n+1}}}}^2
\\&= \sum_{n \in \N} \abs{\inner{\signal}{e_n}}^2  + \sum_{n \in \N} \frac{1}{n+1} \abs{\inner{\signal}{e_n}}^2
\\
\sum_{\la \in \La} \abs{\inner{y}{v_\la}}^2 &= \sum_{n \in \N} \abs{\inner{y}{e_n}}^2 + \sum_{n \in \N} (n+1) \abs{\inner{y}{\frac{e_n}{\sqrt{n+1}}}}^2
\\& = \norm{\data}^2 + \norm{\data}^2 = 2 \norm{\data}^2 \,.
\end{align*}

Hence  $\uo$ is a frame  with frame bounds $A=1$ and  $B=2$ and $\vo$ is a frame with bounds $A=B=2$. Moreover, the quasi-singular value relation $\Ko^* v_\la  =  \kappa_\la u_\la$ holds. Therefore  $(\uo,\vo,\kao)$ is a DFD for the compact operator  $\Ko$. However, the sequence  $\kao$ has accumulation points $0$ and $1$.
\end{example}

Note  that we can easily modify example \ref{ex:HP}  such that $\infty$ is an accumulation point of $\kao$. To see this consider $\Ko$ and $\vo$ from the example above and change $\uo$ and $\kao$ to
\begin{align*}
\uo &=\Bigl(e_0,e_0 \mid \frac{e_1}{2},\frac{e_1}{\sqrt{2}},\frac{e_1}{\sqrt{2}} \mid\frac{e_2}{3},\frac{e_2}{\sqrt{3}},\frac{e_2}{\sqrt{3}},\frac{e_2}{\sqrt{3}} \mid \ldots \Bigr)
\\
\kao &= \Bigl( 1,1 \mid \sqrt{2}, \frac{1}{\sqrt{2}},\frac{1}{\sqrt{2}} \mid \sqrt{3}, \frac{1}{\sqrt{3}},\frac{1}{\sqrt{3}},\frac{1}{\sqrt{3}} \mid \ldots \Bigr).
\end{align*}
Then  $(\uo,\vo ,\kao)$ is still a valid DFD of $\Ko$
where $\uo$ has  frame bounds $A=1$ and $B=2$,
and $\kao$ has  accumulation points $0$ and $\infty$.

\section{Regularization by filtered DFD}
\label{sec:reg}

Throughout this section, let $(\uo, \vo, \kao)$ be a DFD of the operator $\Ko \colon \dom (\Ko ) \subseteq  \X \to \Y$ and $\buo$ a dual frame of $\uo$.  Recall that we allow the forward operator  $\Ko$ to be unbounded.  For typical inverse problems, the Moore Penrose inverse $\Ko^\plus$ is unbounded and has to be regularized. In this section we develop a regularization concept by filtered DFDs.  

\subsection{Filtered DFD}

A wide class of classical regularization methods can be constructed by spectral filtering.  Below we extend these concepts to regularization by filtering a DFD. We start by defining regularizing filters using properties similar to  \cite[Theorem 4.2]{engl1996regularization}. Be aware that our filter functions $f_\al$ correspond  to $\kappa g_\al(\kappa^2)$ where $g_\al$ are the filter functions  used in  \cite{engl1996regularization}.

\begin{definition}[Regularizing filter]\label{def:refF}
We call a  family $(f_\al)_{\al>0}$ of piecewise continuous functions $f_\al\colon (0, \infty)  \to \R$  a regularizing filter if,
\begin{enumerate}[itemindent =2em, leftmargin =2em, label=(F\arabic*)]
\item\label{F1}  $\forall \al > 0 \colon 
 \norm{f_\al}_\infty  < \infty$.
\item\label{F2} $\exists C>0 \colon  \sup\{  \abs{ \kappa f_\al(\kappa )} \mid \al > 0 \wedge  \kappa \geq 0 \}\leq C$.
\item\label{F3} $\forall \kappa \in (0,\infty) \colon \lim_{\al\to 0} f_\al(\kappa )=1/\kappa $.
\end{enumerate}
\end{definition}

Using a regularizing filter we define the following central concept of this paper.

\begin{definition}[Filtered DFD] \label{def:fdfd}
Let $(f_\al)_{\al>0}$ be a regularizing filter and define
\begin{equation} \label{eq:fex}
\forall \al >0 \colon \quad
\Freg_\al \colon \Y \to \X \colon
\data \mapsto  \sum_{\la \in \La} f_\al(\ka_\la)\inner{y}{v_\la}\bar u_\la \,.
\end{equation}
We call the family  $(\Freg_\al)_{\al >0}$ the filtered diagonal frame decomposition (filtered DFD)  according to $(f_\al)_{\al>0}$ based on the DFD $(\uo, \vo, \kao)$ and the dual frame  $\buo$.
\end{definition}

As  mentioned above,  our filter functions $f_\al$ correspond  to $\kappa g_\al(\kappa^2)$ where $g_\al$ are the filters  commonly used  in  spectral filtering. In spectral filtering, the operators $ g_\al( \Ko^*\Ko) \Ko^* (\data) = \sum_{\la \in \La} \ka_\la g_\al(\ka_\la^2) \inner{\data}{u_\la} \bar u_\la $ are derived from functional calculus. Further note that our assumptions \ref{F1}-\ref{F3} with $f_\al(\kappa)  = \kappa g_\al(\kappa^2)$ are weaker than the ones  \cite[Theorem 4.2]{engl1996regularization} where $g_\al$ is assumed to be bounded.

\subsection{Convergent regularization methods}

Below we show that filtered DFD yields a well defined convergent regularization method. To that end, we recall the definition of a regularization method taken from   \cite[Definition~3.1]{engl1996regularization} for case of bounded $\Ko$ and adopted to the unbounded case considered here. For regularization with unbounded forward operators see, for example,  \cite{hofmann2009regularization,groetsch2006stable}.

\begin{definition}[Regularization method]
Let $(\Ro_\al)_{\al>0}$ be a family of continuous operators $\Ro_\al \colon \Y\to \X$, $\data  \in  \dom(\Ko^\plus)$ and $\al^\ast\colon (0,\infty)\times \Y \to (0,\infty)$. Then the pair $((\Ro_\al)_{\al>0},\al^\ast)$ is a regularization method for the solution of $\Ko \signal =\data$, if
\begin{align*}
&\lim_{\delta\to 0} \sup\{\al^\ast(\delta,\data^\delta)\mid \data^\delta\in \Y \wedge   \|\data^\delta - y\|\leq\delta\}=0
\\
&\lim_{\delta\to 0} \sup\{\| \Ko^\plus y -\Ro_{\al^\ast(\delta,\data^\delta)}\data^\delta \| \mid \data^\delta \in \Y \wedge     \|\data^\delta-\data\|\leq\delta\} = 0 \,.
\end{align*}
In this case we call $\al^\ast $ an admissible parameter choice. If for any $\data  \in  \dom(\Ko^\plus)$ there exists an admissible parameter choice, then  we call  $(\Ro_\al)_{\al>0}$ a regularization of  the Moore Penrose inverse $\Ko^\plus$.
\end{definition}

Given an SVD $(u_n, v_n, \sigma_n)_{n\in \N}$ of $\Ko$ and a regularizing filter $(f_\al)_{\al>0}$, it is well known (at least if $\kappa \mapsto \kappa^{-1} f_\al (\kappa)$ is  bounded) that  the family
\begin{equation*}
 \sum_{n \in \N}
g_\al(\sigma_n^2) \inner{\Ko^\ast  y}{u_n} u_n =
\sum_{n \in \N}
 f_\al (\sigma_n) \inner{y}{v_n} u_n
=  \Freg_\al (y)
\end{equation*}
with $f_\al (\sigma_n)=  \sigma_n g_\al (\sigma_n^2)$ defines a regularization method \cite[Theorem~8]{engl1996regularization} together with convergence rates.
Two prominent examples of filter-based regularization methods are classical Tikhonov regularization and truncated SVD.
In truncated SVD,  the regularizing  filter is given by
$f_\al(\sigma) = \sigma^{-1} \chi_{ [\al, \infty) } (\sigma^2)$.
In Tikhonov regularization, the regularizing filter is given by  $f_\al(\sigma)=\sigma/(\sigma^2 +\al)$.  In this paper we generalize such  results by allowing a DFD instead of the SVD. To that  end we use the following well known result.

\begin{lemma}[Characterization of linear regularizations] \label{lem:apriori}
Let   $\skl{\Ro_\al}_{\al >0}$ be a family of linear bounded operators  which pointwise converge
to $\Ko^\plus$ on $\dom \skl{\Ko^\plus}$ and let  $y \in \dom(\Ko^\plus)$. If  the parameter choice $\al^\ast \colon \skl{0, \infty} \to \skl{0, \infty}$ satsfies $\lim_{\delta \to 0}  \al^\ast\skl{\delta} = \lim_{\delta \to 0} \delta \snorm{\Ro_{\al^\ast}\skl{\delta}} =0$, then the pair
$\skl{ \skl{\Ro_\al}_{\al >0}, \al^\ast}$ is a regularization
method for  $\Ko x =y$.\end{lemma}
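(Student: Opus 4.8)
The plan is to verify directly the two defining properties of a regularization method for the given parameter choice $\al^\star$, which here depends on the noise level $\delta$ alone. Since $\al^\star$ does not depend on the perturbed datum $\data^\delta$, the supremum in the first condition collapses, and that condition reduces immediately to the hypothesis $\lim_{\delta \to 0} \al^\star(\delta) = 0$. The entire remaining work therefore concentrates on the second (total error) condition, and the natural tool is the standard approximation--stability error splitting.

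For the second condition I would fix $\delta > 0$ and an arbitrary $\data^\delta \in \Y$ with $\norm{\data^\delta - \data} \leq \delta$, and insert the intermediate term $\Ro_{\al^\star(\delta)} \data$ via the triangle inequality:
\begin{equation*}
\norm{\Ko^\plus \data - \Ro_{\al^\star(\delta)} \data^\delta}
\leq
\norm{\Ko^\plus \data - \Ro_{\al^\star(\delta)} \data}
+
\norm{\Ro_{\al^\star(\delta)}(\data - \data^\delta)} .
\end{equation*}
The first summand is the approximation error and is independent of $\data^\delta$; the second is the propagated data error. For the latter I would use the linearity and boundedness of $\Ro_{\al^\star(\delta)}$ together with $\norm{\data - \data^\delta} \leq \delta$ to estimate
\begin{equation*}
\norm{\Ro_{\al^\star(\delta)}(\data - \data^\delta)}
\leq
\snorm{\Ro_{\al^\star(\delta)}} \, \norm{\data - \data^\delta}
\leq
\delta \, \snorm{\Ro_{\al^\star(\delta)}} ,
\end{equation*}
a bound uniform in $\data^\delta$ that tends to $0$ by the second hypothesis on $\al^\star$. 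Since both the approximation term and this bound are independent of $\data^\delta$, they pass through the supremum over all admissible $\data^\delta$, so it only remains to show that the approximation error $\norm{\Ko^\plus \data - \Ro_{\al^\star(\delta)} \data}$ vanishes as $\delta \to 0$.

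The remaining step, and the only point requiring a little care, is a composition-of-limits argument: by assumption $\Ro_\al \data \to \Ko^\plus \data$ as $\al \to 0$, and $\al^\star(\delta) \to 0$ as $\delta \to 0$. Hence, given $\eps > 0$, one first chooses $\al_0$ with $\norm{\Ko^\plus \data - \Ro_\al \data} < \eps$ for all $\al < \al_0$, and then $\delta_0$ with $\al^\star(\delta) < \al_0$ for all $\delta < \delta_0$, which gives $\norm{\Ko^\plus \data - \Ro_{\al^\star(\delta)} \data} < \eps$ for $\delta < \delta_0$. Combining this with the uniform noise bound yields
\begin{equation*}
\lim_{\delta \to 0} \sup\set{ \norm{\Ko^\plus \data - \Ro_{\al^\star(\delta)} \data^\delta} \mid \data^\delta \in \Y \wedge \norm{\data^\delta - \data} \leq \delta } = 0 ,
\end{equation*}
which is exactly the second condition.

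I do not expect a substantial obstacle here: the argument is a routine triangle-inequality splitting. The only genuine subtlety is that the noise term must be controlled through $\snorm{\Ro_{\al^\star(\delta)}}$, which in the ill-posed case typically blows up as $\al^\star(\delta) \to 0$; it is precisely the coupled hypothesis $\delta \snorm{\Ro_{\al^\star(\delta)}} \to 0$ that balances this blow-up against the shrinking data error and makes the whole scheme convergent.
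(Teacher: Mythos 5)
Your proof is correct, and it is exactly the standard approximation--stability splitting that the paper itself does not spell out: the paper's ``proof'' of Lemma~\ref{lem:apriori} is only a citation to \cite{engl1996regularization}, and your argument reproduces the textbook proof from that reference, including the two points that actually need care (the collapse of the first supremum because $\al^\star$ depends on $\delta$ alone, and the composition-of-limits step for the approximation error). Nothing is missing.
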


\begin{proof}
For the case of bounded forward operators see, for example,  \cite[Proposition 3.7]{engl1996regularization}. The simple proof  is based  on the estimate $\norm{\Ko^\plus \data - \Ro_\al \data^\delta} \leq \norm{\Ko^\plus \data - \Ro_\al \data}  + \delta \norm{\Ro_\al} $ and applies to  case of unbounded $\Ko$.
\end{proof}

\subsection{Well-posedness and convergence}

Let $(f_\al)_{\al>0}$ be a regularizing filter and $(\Freg_\al)_{\al >0}$ be the filtered DFD defined by \eqref{eq:fex}.

\begin{proposition}[Existence and stability] \label{prop:well}
For any $\al>0$ the operator $\Freg_\al$ is well defined, linear and bounded.  Moreover, $\norm{\Freg_\al} \leq \norm{ f_\al}_\infty   (B_{\buo} B_\vo)^{1/2}$, where $B_{\buo}$ and $B_\vo$ are the upper frame bounds of $\buo$ and $\vo$, respectively.
\end{proposition}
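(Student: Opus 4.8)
The plan is to realise $\Fo_\al$ as a composition of three bounded linear operators, so that its well-definedness and the norm bound both follow from the corresponding properties of the factors. Writing $f_\al(\kao) \coloneqq (f_\al(\ka_\la))_{\la \in \La}$, the defining formula \eqref{eq:fex} reads exactly $\Fo_\al = \TT_{\buo}^* \M_{f_\al(\kao)} \TT_\vo$, where $\TT_\vo$ is the analysis operator of $\vo$, $\M_{f_\al(\kao)}$ is the diagonal multiplication operator introduced in the remark following Definition \ref{def:dfd}, and $\TT_{\buo}^*$ is the synthesis operator of the dual frame $\buo$. The whole proof amounts to checking that this factorisation is legitimate and then chaining the operator norms.

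First I would establish well-definedness. For fixed $\data \in \Y$ the frame property of $\vo$ gives $\TT_\vo \data = (\inner{\data}{v_\la})_{\la \in \La} \in \ell^2(\La)$ with $\norm{\TT_\vo \data}^2 \leq B_\vo \norm{\data}^2$. By the boundedness assumption (F1) we have $\abs{f_\al(\ka_\la)} \leq \norm{f_\al}_\infty$ for every $\la$, so the multiplier sequence $f_\al(\kao)$ is bounded; hence $\M_{f_\al(\kao)}$ is defined on all of $\ell^2(\La)$ and satisfies $\norm{\M_{f_\al(\kao)} c} \leq \norm{f_\al}_\infty \norm{c}$. In particular the coefficient sequence $(f_\al(\ka_\la)\inner{\data}{v_\la})_{\la \in \La}$ lies in $\ell^2(\La)$, and since the synthesis operator $\TT_{\buo}^*$ of the frame $\buo$ is bounded on $\ell^2(\La)$, the series $\sum_{\la \in \La} f_\al(\ka_\la)\inner{\data}{v_\la}\bar u_\la$ converges (unconditionally) in $\X$. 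This shows that $\Fo_\al \data$ is well defined and confirms the identity $\Fo_\al = \TT_{\buo}^* \M_{f_\al(\kao)} \TT_\vo$.

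Second, the norm estimate is immediate from the factorisation: using $\norm{\TT_\vo} \leq \sqrt{B_\vo}$, $\norm{\M_{f_\al(\kao)}} \leq \norm{f_\al}_\infty$, and $\norm{\TT_{\buo}^*} = \norm{\TT_{\buo}} \leq \sqrt{B_{\buo}}$ (the latter because synthesis is the adjoint of analysis and the upper frame bound controls the analysis operator norm), submultiplicativity of the operator norm yields $\norm{\Fo_\al} \leq \sqrt{B_{\buo}}\,\norm{f_\al}_\infty\,\sqrt{B_\vo} = \norm{f_\al}_\infty \sqrt{B_{\buo} B_\vo}$, as claimed.

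There is no genuinely hard step here; the argument is a routine factorisation. The only point that deserves care is the transition into $\ell^2(\La)$: one must invoke (F1) to guarantee that multiplication by $f_\al(\kao)$ preserves square-summability — unlike multiplication by $1/\kao$, which is precisely the unbounded operation making $\Ko^\plus$ ill-posed — and one must remember that the synthesis operator is a priori only available as the bounded adjoint of $\TT_{\buo}$, so that convergence of the series is guaranteed by boundedness rather than requiring a separate summability check. Once these two observations are in place, the estimate drops out mechanically.
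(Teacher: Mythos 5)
Your proof is correct and takes essentially the same route as the paper: the paper chains the very same three bounds pointwise, namely $\norm{\Fo_\al \data}^2 \leq \norm{\TT_{\buo}^*}^2 \norm{f_\al}_\infty^2 \sum_{\la \in \La}\abs{\inner{\data}{v_\la}}^2 \leq \norm{f_\al}_\infty^2 B_{\buo} B_\vo \norm{\data}^2$, which is precisely your factorisation $\Fo_\al = \TT_{\buo}^* \M_{f_\al(\kao)} \TT_\vo$ with submultiplicativity written out element-wise. Your packaging as an operator composition is a purely presentational difference, and your attention to (F1) as the point where square-summability is preserved matches the paper's opening remark in its proof.
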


\begin{proof} 
Let $\al > 0$, $\data \in \Y$. According to \ref{F1},  $f_\al$ is bounded and therefore   $(f_\al(\ka_\la)\inner{y}{v_\la})_{\la \in \La} \in \ell^2(\La)$. Further,
$\norm{\Freg_\al \data}^2 
=\| \sum_{\la \in \La} f_\al(\ka_\la) \inner{y}{v_\la}\bar u_\la \|^2
 \leq  \norm{f_\al}_\infty^2 B_{\buo} B_\vo \norm{\data}^2$ which shows that $\Freg_\al \data$ is well defined and bounded with $\norm{\Freg_\al} \leq \norm{ f_\al}_\infty   (B_{\buo} B_\vo)^{1/2}$.
\end{proof}

\begin{proposition}[Pointwise convergence]\label{prop:point}
For all $\data \in \dom(\Ko^\plus) \colon $ $\lim_{\al \to 0} \Freg_\al \data = \Ko^\plus \data$.
\end{proposition}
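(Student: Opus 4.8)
The plan is to compare $\Fo_\al \data$ directly with the series representation of $\Ko^\plus \data$ supplied by Theorem \ref{thm:DFDinvPinv}. Since both are obtained by applying the bounded synthesis operator $\TT_{\buo}^*$ to coefficient sequences, I would write
\[
\Fo_\al \data - \Ko^\plus \data
= \TT_{\buo}^*\Bigl( \bigl( f_\al(\ka_\la) - \ka_\la^{-1}\bigr)\inner{\data}{v_\la}\Bigr)_{\la \in \La},
\]
and use the boundedness of $\TT_{\buo}^*$ (exactly as in Proposition \ref{prop:well}) to reduce the claim to showing
\[
\sum_{\la \in \La}\bigl| f_\al(\ka_\la) - \ka_\la^{-1}\bigr|^2 \, \abs{\inner{\data}{v_\la}}^2 \longrightarrow 0
\quad\text{as } \al \to 0 .
\]

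The crucial step is to rewrite the summand by factoring out $\ka_\la^{-1}$,
\[
\bigl( f_\al(\ka_\la) - \ka_\la^{-1}\bigr)\inner{\data}{v_\la}
= \bigl(\ka_\la f_\al(\ka_\la) - 1\bigr)\,\ka_\la^{-1}\inner{\data}{v_\la},
\]
and then to exploit the hypothesis $\data \in \dom(\Ko^\plus)$. Writing $\data = \Ko \signal^\plus + \data^\perp$ with $\signal^\plus = \Ko^\plus \data \in (\ker\Ko)^\perp$ and $\data^\perp \in \ran(\Ko)^\perp$, the component $\data^\perp$ is annihilated by $v_\la \in \overline{\ran\Ko}$, so that the quasi-singular relation \eqref{eq:qsr} gives $\ka_\la^{-1}\inner{\data}{v_\la} = \ka_\la^{-1}\inner{\Ko\signal^\plus}{v_\la} = \ka_\la^{-1}\inner{\signal^\plus}{\ka_\la u_\la} = \inner{\signal^\plus}{u_\la}$. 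Hence the quantity to be controlled becomes $\sum_{\la \in \La} |\ka_\la f_\al(\ka_\la) - 1|^2\, |\inner{\signal^\plus}{u_\la}|^2$, and since $\uo$ is a frame the coefficient sequence $(\inner{\signal^\plus}{u_\la})_{\la}$ lies in $\ell^2(\La)$. I expect this to be the main obstacle, since it is the manoeuvre that converts the potentially unbounded factor $\ka_\la^{-1}$ into a harmless, $\al$-independent, $\ell^2$-summable weight; without restricting to $\dom(\Ko^\plus)$ the tail of the series cannot be controlled.

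It then remains a dominated-convergence argument over the counting measure on $\La$. Property (F2) yields the uniform bound $|\ka_\la f_\al(\ka_\la) - 1| \le C + 1$ for all $\la$ and all $\al > 0$, so every summand is dominated by $(C+1)^2\,|\inner{\signal^\plus}{u_\la}|^2$, which is summable and independent of $\al$; property (F3) gives $\ka_\la f_\al(\ka_\la) \to 1$ as $\al \to 0$ for each fixed $\la$, so every summand tends to $0$ pointwise. To conclude without invoking a measure-theoretic dominated convergence theorem verbatim, I would fix $\eps > 0$, choose a finite set $F \subseteq \La$ carrying all but $\eps$ of the $\ell^1$-mass of the dominating sequence, let the finitely many terms on $F$ vanish in the limit, and bound the tail uniformly by $(C+1)^2\eps$. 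This shows the sum tends to $0$, whence $\norm{\Fo_\al \data - \Ko^\plus \data} \to 0$.
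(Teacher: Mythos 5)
Your proof is correct and follows essentially the same route as the paper's: both reduce the error to the quantity $\sum_{\la \in \La} \abs{1-\ka_\la f_\al(\ka_\la)}^2 \abs{\inner{\signal^\plus}{u_\la}}^2$ (via the quasi-singular relation $\inner{\data}{v_\la} = \ka_\la \inner{\signal^\plus}{u_\la}$ on $\dom(\Ko^\plus)$) and then conclude by dominated convergence, using (F2) for the uniform bound and (F3) for the pointwise limit. The only cosmetic difference is that you expand $\Ko^\plus\data$ through the inversion formula of Theorem~\ref{thm:DFDinvPinv} while the paper expands $\signal^\plus$ through the dual frame identity, which amounts to the same computation.
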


\begin{proof}
Let $\data = \data^\plus + \data^\perp \in \ran(\Ko) \oplus \ran(\Ko)^\perp$ and set $\signal^\plus := \Ko^\plus \data  \in \ker(\Ko)^\perp\cap \dom(\Ko)$. Then $\Ko \signal^\plus = \Po_{\overline{\ran(\Ko)}} y  = \data^\plus$ and therefore
\begin{align*}
\norm{\signal^\plus - \Freg_\al \data}^2
&= \hnorm{\signal^\plus - \sum_{\la \in \La} f_\al(\ka_\la)\inner{y}{v_\la}\bar u_\la}^2 \\
&=\hnorm{\signal^\plus - \sum_{\la \in \La} f_\al(\ka_\la)\inner{\data^\plus}{v_\la}\bar u_\la}^2
\\
&
= \hnorm{\signal^\plus - \sum_{\la \in \La} f_\al(\ka_\la)\inner{\Ko \signal^\plus}{v_\la}\bar u_\la}^2
\\
&= \hnorm{\sum_{\la \in \La} \inner{\signal^\plus}{u_\la} \bar u_\la - \sum_{\la \in \La} \ka_\la f_\al(\ka_\la)\inner{\signal^\plus}{u_\la}\bar u_\la}^2
\\
& = \hnorm{\sum_{\la \in \La} (1 - \ka_\la f_\al(\ka_\la)) \inner{\signal^\plus}{u_\la}\bar u_\la}^2 \\
& \leq B_{\buo}  \sum_{\la \in \La} \abs{1-\ka_\la f_\al(\ka_\la)}^2 \abs{\inner{\signal^\plus}{u_\la}}^2
\\
& \leq \sup_{\la \in \La} \abs{1-\ka_\la f_\al(\ka_\la)}^2 B_{\buo} B_\uo \norm{\signal^\plus}^2 \,.
\end{align*}
According to \ref{F2}, \ref{F3}   we have  $\sup_{\alpha},\la \abs{1-\ka_\la f_\al(\ka_\la)}^2 < \infty$ and $\lim_{\al \to 0} \abs{1- \ka_\la f_\al(\ka_\la)} =0$ pointwise.
Therefore, application of the dominated convergence theorem to the series in the second last line yields $\snorm{\signal^\plus - \Freg_\al \data}^2 \to 0$ for $\al \to 0$.
\end{proof}

By collecting the above results we obtain the  following convergence theorem for filtered DFD.

\begin{theorem}[Convergence]\label{thm:conv}
Let $(f_\al)_{\al>0}$ be a regularizing filter, $(\uo, \vo, \kao)$ be a DFD of  $\Ko \colon \dom(\Ko) \subseteq \X \to \Y$ and $\buo$ a dual frame of $\uo$.
Then $((\Freg_\al)_{\al >0},\al^*)$ is a regularization method for $\Ko x=y$ provided  that the parameter choice $\al^* \colon (0, \infty) \to (0,\infty)$
satisfies $0 =\lim_{\delta \to 0} \al^*(\delta)= \lim_{\delta \to 0} \delta  \snorm{f_{\al^*(\delta)}}_\infty $.
\end{theorem}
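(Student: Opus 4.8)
The plan is to verify the two hypotheses of the characterization of linear regularizations, Lemma~\ref{lem:apriori}, for the family $(\Fo_\al)_{\al>0}$ and then invoke that lemma directly. Concretely, I would set $\Ro_\al \coloneqq \Fo_\al$ in Lemma~\ref{lem:apriori}, so that the only things left to check are: (i) each $\Fo_\al$ is a linear bounded operator; (ii) $\Fo_\al \to \Ko^\plus$ pointwise on $\dom(\Ko^\plus)$; and (iii) the parameter choice $\al^*$ fulfils the two limit conditions required by the lemma, namely $\lim_{\delta\to 0}\al^*(\delta)=0$ and $\lim_{\delta\to 0}\delta\,\snorm{\Fo_{\al^*(\delta)}}=0$.

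First I would record that items (i) and (ii) have already been established. Boundedness of each $\Fo_\al$ is exactly Proposition~\ref{prop:well}, while linearity is immediate from the defining formula \eqref{eq:fex}, since $\data \mapsto \inner{\data}{v_\la}$ is linear; the pointwise convergence $\Fo_\al \data \to \Ko^\plus \data$ for every $\data \in \dom(\Ko^\plus)$ is Proposition~\ref{prop:point}. Thus $(\Fo_\al)_{\al>0}$ is a family of linear bounded operators converging pointwise to $\Ko^\plus$ on its domain, which is precisely the standing assumption of Lemma~\ref{lem:apriori}.

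The only genuine step is (iii), the transfer of the hypothesis on the filters to a hypothesis on the operators. Here I would use the quantitative bound from Proposition~\ref{prop:well}, namely $\snorm{\Fo_\al} \leq \snorm{f_\al}_\infty \sqrt{B_{\buo} B_\vo}$ with $B_{\buo}, B_\vo$ the fixed upper frame bounds. Evaluating at $\al=\al^*(\delta)$ and multiplying by $\delta$ gives
\[
\delta\,\snorm{\Fo_{\al^*(\delta)}} \leq \sqrt{B_{\buo} B_\vo}\,\, \delta\,\snorm{f_{\al^*(\delta)}}_\infty .
\]
Since $\sqrt{B_{\buo} B_\vo}$ is a constant independent of $\delta$ and the theorem's hypothesis gives $\delta\,\snorm{f_{\al^*(\delta)}}_\infty \to 0$ as $\delta\to 0$, the right-hand side tends to zero, hence so does $\delta\,\snorm{\Fo_{\al^*(\delta)}}$. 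Together with the assumed $\al^*(\delta)\to 0$, both limit conditions of Lemma~\ref{lem:apriori} hold.

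With (i)--(iii) in place, Lemma~\ref{lem:apriori} applies verbatim and yields that $((\Fo_\al)_{\al>0},\al^*)$ is a regularization method for $\Ko x = y$, completing the argument. I do not anticipate a real obstacle: the theorem is essentially the assembly of Propositions~\ref{prop:well} and~\ref{prop:point} through the abstract criterion of Lemma~\ref{lem:apriori}, and the compactness of $\Ko$ is not even exploited beyond guaranteeing the DFD setting. The single point requiring care is to confirm that the stability estimate of Proposition~\ref{prop:well} is uniform enough to convert the filter condition $\delta\,\snorm{f_{\al^*(\delta)}}_\infty\to 0$ into the operator condition $\delta\,\snorm{\Fo_{\al^*(\delta)}}\to 0$, which it is precisely because the frame bounds enter only as a fixed multiplicative constant.
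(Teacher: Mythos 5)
Your proposal is correct and is essentially identical to the paper's own proof: both verify the hypotheses of Lemma~\ref{lem:apriori} via Propositions~\ref{prop:well} and~\ref{prop:point}, and both use the estimate $\snorm{\Fo_\al} \leq \snorm{f_\al}_\infty \sqrt{B_{\buo} B_\vo}$ to convert the filter condition $\delta\,\snorm{f_{\al^*(\delta)}}_\infty \to 0$ into the operator-norm condition $\delta\,\snorm{\Fo_{\al^*(\delta)}} \to 0$ required by the lemma. Your write-up is in fact slightly more careful than the paper's, which glosses over linearity and contains a typo ($\dom(\Ko)$ instead of $\dom(\Ko^\plus)$), but the mathematical content is the same.
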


\begin{proof}
According to Propositions \ref{prop:well} and \ref{prop:point},  $(\Freg_\al)_{\al >0}$ is a family  of bounded linear operators that converges  pointwise to $\Ko^\plus$ on $\dom(\Ko)$.
According to Lemma \ref{lem:apriori}  the pair $((\Freg_\al)_{\al >0},\al^*)$ is a regularization method if $\al^\star(\delta), \delta \snorm{\Freg_{\al^*(\delta)}} \to 0$ as $\delta \to 0$. The estimate  $\norm{\Freg_\al} \leq \norm{f_\al}_\infty \sqrt{B_{\buo} B_\vo}$ of Proposition~\ref{prop:well}  finally yields the claim.
\end{proof}

\subsection{Convergence rates}

Next we derive convergence rates which give quantitative estimates on the
reconstruction error $\norm{\signal^\plus - \signal_\al^\delta}$.

\begin{theorem}[Convergence rates]\label{thm:rates}
Let $(f_\al)_{\al>0}$ be a regularizing filter, $(\uo, \vo, \kao)$ be a DFD of $\Ko$,  $\buo$ a dual frame of $\uo$ and  $(\Freg_\al)_{\al >0}$ be the filtered DFD defined by \eqref{eq:fex}. For given numbers $\rho, \mu > 0$ and some constant $C_\mu$ suppose
\begin{enumerate}[itemindent =2em, leftmargin =2em,label=(R\arabic*)]
\item\label{R1} $\norm{f_\al}_\infty = \mathcal{O}({\al^{-1/2})}$ as $\al \to 0$,
\item\label{R2} $\forall \al >0 \colon  \sup\{{\ka^{2\mu}  \abs{1-\ka f_\al(\ka)}} \mid \ka \in (0, \infty)\} \leq  C_\mu \al^{\mu}$,
\item\label{R3} $\al = \al^*(\delta,\data^\delta) \asymp \left(\delta / \rho \right)^{2 /(2\mu + 1)}$.
\end{enumerate}
Suppose  $\signal^\plus \in \X$ satisfies  the following source-type condition
\begin{equation}  \label{eq:sc}
\exists \bom  \in \ell^2(\La) \colon
\Bigl( \norm{\bom}_2 \leq \rho
 \;  \wedge \;   \forall \la \in \La \colon \inner{\signal^\plus}{u_\la}=\ka_\la^{2\mu} \omega_\la 
 \Bigr)
  \,.
\end{equation}
Then, for   some constant $c=c_\mu$ and all  $\data^\delta \in \Y $ with  $\norm{\data^\delta-\Ko \signal^\plus} \leq \delta$ with sufficiently small $\delta$, the following convergence rate result holds:
\begin{equation*}
 \norm{\signal^\plus - \Freg_{\al^*}(\data^\delta)}  \leq c_\mu \,
 \delta^\frac{2\mu}{2\mu+1}  \rho^\frac{1}{2\mu+1}  \,.
\end{equation*}
\end{theorem}

\begin{proof}
Let $\signal^\plus$, $\bom$, $\data^\delta$ satisfy $\inner{\signal^\plus}{u_\la}=\ka_\la^{ 2 \mu} \omega_\la$, $\norm{\bom}_{\ell^2} \leq \rho$, $\norm{\data^\delta-\Ko \signal^\plus} \leq \delta$. Then
\begin{align*}
\norm{\Freg_\al(\data^\delta) - \signal^\plus} & \leq \norm{\Freg_\al(\data^\delta - \Ko \signal^\plus)} + \norm{\Freg_\al(\Ko \signal^\plus) - \signal^\plus}\\
& \leq \norm{\Freg_\al}\delta + \hnorm{\sum_{\la \in \La} (1 - \ka_\la f_\al(\ka_\la)) \inner{\signal^\plus}{u_\la}\bar u_\la}\\
& \leq  \sqrt{B_{\buo} B_\vo  } \,  \norm{f_\al}_\infty  \delta +  \biggl(  B_{\buo} \sum_{\la \in \La} \abs{1-\ka_\la f_\al(\ka_\la)}^2 \abs{\inner{\signal^\plus}{u_\la}}^2  \biggr)^\frac{1}{2} \\
& \leq c_1 \al^{ -1/2} \, \delta +  \biggl( B_{\buo} \sum_{\la \in \La} \abs{1-\ka_\la f_\al(\ka_\la)}^2 \abs{\ka_\la^{ 2\mu} \omega_\la}^2 \biggr)^\frac{1}{2}\\
& \leq c_1 \al^{-1/2} \, \delta + \sqrt{B_{\buo}}C_\mu \al^\mu \rho.
\end{align*}
Now choose $\al = \al^*(\delta,\data^\delta) \asymp ( \delta / \rho )^{ 2 / (2\mu + 1)}$. Then the above estimate implies
\begin{equation*}
\norm{\Freg_{\al^*(\delta,\data^\delta)}(\data^\delta) - \signal^\plus} \leq c_2 \left( \delta^{ 1-\frac{1}{2\mu +1}} \rho^{  \frac{1}{2\mu +1}} + \delta^{ \frac{2\mu}{2\mu +1}} \rho^{ 1-\frac{2\mu}{2\mu +1}} \right) = \mathcal{O}\left(\delta^{ 2\mu/(2\mu +1)} \right) \,,
\end{equation*}
and completes the proof.
\end{proof}

\begin{remark}[Qualification of a filter] \label{rem:qualification}For a given regularizing filter, the condition \ref{R2} may only hold for  $ \mu \in (0, \mu_0]$ but not for $\mu  > \mu_0$.  The index  $ \mu_0$  is often called the qualification of the regularizing filter (see the discussion on \cite[page 76]{engl1996regularization}).  If \ref{R2} holds for all $\mu >0$, the qualification is said to be infinite. It is known  that the qualification of  $f_\al(\kappa) = \kappa/(\kappa^2 + \alpha) $ is $\mu_0 = 1$ and that $f_\al   (\kappa) = \kappa^{-1} \chi_{[\alpha, \infty)}(\kappa^2) $ has infinite qualification.  
\end{remark}

\begin{remark}[Source conditions and generalizations]  
In Theorem~\ref{thm:rates}  we derived convergence rates for elements satisfying the source-type condition \eqref{eq:sc} which can be written as $\TT_{\uo} \signal^\plus \in \ran( \M_\kao^{2\mu} )$. This may be seen as an abstract  smoothness condition for $\signal^\plus$. As in the case of classical  spectral filtering one could  study source conditions of the form $\TT_{\uo} \signal^\plus \in \ran( \phi(\M_{\kao}) )$ for more general index functions $\phi$. For example, logarithmic source conditions are useful for exponentially ill-posed problems; see \cite{hohage2000regularization}. Another generalization  is the use of approximate source conditions based on distance functions \cite{hofmann2006approximate}. Investing such concepts in the context of DFDs seems very interesting but beyond the scope of the present article.  
\end{remark}

\subsection{Examples of regularizing filters} \label{sec:regfil}

In this subsection we study  examples of filtered DFDs, namely  truncated DFD and  Tikhonov-filtered DFD.  We verify that the corresponding  filters satisfy the requirement for being  regularizing and also  that the convergence rate conditions  in  Theorem \ref{thm:rates} are satisfied for all  $\mu >0$ in case of truncated DFD and for $\mu \leq 1$ in case of Tikhonov-filtered DFD.  

\paragraph{Truncated DFD:}

For any $\al >0$ consider the cut-off functions 
\begin{equation*}
f^{(1)}_\al \colon (0, \infty) \to \R \colon \kappa \mapsto  
 \begin{cases}
1/\ka \quad &\text{if } \ka^2 \geq \al \\
0 \quad &\text{if } \ka^2  <  \al \,.
\end{cases}
\end{equation*}
Obviously conditions \ref{F1}-\ref{F3} in Definition~\ref{def:refF} are satisfied with $C=1$ which implies that  $(f^{(1)}_\al)_{\al > 0}$ is a regularizing filter.  Furthermore,  $\sup\{\ka^{2\mu} \abs{1-\ka f^{(1)}_\al(\ka)} \mid \ka  >0\} = \sup\{\ka^{2\mu} \abs{1-\ka f^{(1)}_\al(\ka)} \mid \ka^2 < \al \} = \al^\mu$ for all  $\al, \mu > 0$.  Hence the convergence rates conditions  \ref{R1}, \ref{R2}  of Theorem~\ref{thm:rates} are satisfied.  The corresponding  filtered DFD becomes  
\begin{equation} \label{eq:TDFD}
	\Freg^{(1)}_\al(y) \coloneqq \sum_{\ka_\la^2 \geq \al} \frac{1}{\ka_\la} \inner{y}{v_\la} \bar u_\la \,.
\end{equation}
In the special case where $(\uo,\vo,\kao)$  is an  SVD for  $\Ko$, this is well-known truncated SVD. In analogy, for general DFDs we name  \eqref{eq:TDFD} truncated DFD.

The considerations above allow application of Propositions~\ref{prop:well}, \ref{prop:point}  and Theorem~\ref{thm:conv} showing well-posedness, stability and convergence of \eqref{eq:TDFD}.  Moreover Theorem~\ref{thm:rates} can be applied for any $\mu>0$.  
 Thus for $\signal^\plus$  with $\TT_\uo \signal^\plus \in \ran ( \M_\kappa^{2\mu})$,  the parameter  choice $\al    \asymp \delta^{2/(2\mu + 1)}$  yields the convergence rate $ \snorm{\signal^\plus - \Freg^{(1)}_\al} \data^\delta  = \mathcal{O}  (\delta^{2\mu/(2\mu+1)})$.

\paragraph{Tikhonov type DFD:} \label{anotherfilter}

For any $\al >0$ consider the Tikhonov filter
\begin{equation*}
f^{(2)}_\al \colon (0, \infty) \to \R \colon \kappa \mapsto  \frac{\ka}{\ka^2+\al} \,.
\end{equation*} 
For all $\kappa, \al > 0$ we have 
$ \lvert \kappa  f^{(2)}_\al(\kappa )  \rvert = \lvert \ka^2/(\ka^2 + \al) \rvert   \leq 1$
and  $\lim_{\al \rightarrow 0} f^{(2)}_\al (\kappa) = 1/\ka$. Further, $ f^{(2)}_\al$     is bounded, takes its maximum at $\ka^2 = \al$ and  $\norm{f^{(2)}_\al}_\infty =  \al^{-1/2}/2$. Hence  conditions \ref{F1}-\ref{F3} are satisfied and $(f^{(2)}_\al)_{\al >0}$ is a regularizing filter in the sense of  Definition~\ref{def:refF}.  
Moreover, for $\mu \in (0,1]$ the function $\kappa \mapsto \ka^{2\mu} \abs{1 -\ka f^{(2)}_\al (\kappa)}  = \ka^{2\mu} \al / (\ka^2 + \al)$ has its supremum at $\ka =  \sqrt{\al \mu/(1-\mu)}$. Thus $\sup_\ka  \ka^{2\mu} \abs{1 -\ka f^{(2)}_\al (\kappa)}  = \left( (2\mu)^{\mu} (2-2\mu)^{1-\mu} \right)/2 \al^\mu $.  Hence the filter  $(f^{(2)}_\al)_{\al >0}$ also satisfies the convergence rates conditions of Theorem \ref{thm:rates}.

According to the  above considerations,  Propositions~\ref{prop:well}, \ref{prop:point}  and Theorem~\ref{thm:conv} show  well-posendess and convergence of the filtered DFD    
\begin{equation} \label{eq:zikDFD}
	\Freg^{(2)}_\al( y ) \coloneqq \sum_{\la \in \La}    \frac{\ka_\la}{\ka_\la^2+\al}  \inner{y}{v_\la} \bar u_\la.
\end{equation}
Moreover,   for $\mu \in (0,1]$,  the parameter  choice $\al   \asymp \delta^{2/(2\mu + 1)}$  yields the convergence rate $ \lVert \signal^\plus - \Freg_\al \data^\delta \rVert  = \mathcal{O}
 (\delta^{2\mu/(2\mu+1)})$.  In the special case where  $(\uo,\vo,\kao)$  is a SVD then \eqref{eq:zikDFD} reduces to Tikhonov regularization as in this case $\Freg^{(2)}_\al \data $ equals the minimizer  of the Tikhonov functional $\norm{\Ko x - y}^2 + \alpha \norm{x}^2$. For general DFDs this relation does not hold true. 
 
Notice that the Tikhonov filter $(f^{(2)}_\al)_{\al >0}$ does not satisfy \ref{R2} for  $\mu>1$, which means that  the Tikhonov filter has qualification  $\mu_0=1$; see Remark~\ref{rem:qualification}. This is one motivation for considering regularization methods with higher qualification that can also be implemented without knowledge  of the SVD, such as iterated Tikhonov regularization. Anyway, in this work we allow more general DFDs  which provides an alternative  strategy  to avoid numerically costly SVD computation.

\subsection{Order optimality}

In the following we prove that  the  convergence rates obtained in Theorem~\ref{thm:rates} are order optimal for the  source set defined  by \eqref{eq:sc} in the special case that the  frame $\uo$ admits a biorthogonal sequence $\buo = (u_\la)_{\la\in\La}$ with $\forall \la,  \nu  \in \La \colon  
	\inner{u_\la}{\bar{u}_\nu}=\delta_{\la \nu} $.
The requirement that $\uo$ has a biorthogonal sequence is equivalent to $\uo$ being a Riesz-basis of $\ker(\Ko)^\perp$. 
To do this, we define 
\begin{equation} \label{eq:sourceset}
{ U_{\mu,\rho}}  \coloneqq \Bigl\{ x\in\dom(\Ko) \mid \inner{x}{u_{\lambda}}=\kappa_{\lambda}^{2 \mu}w_{\lambda} \wedge  \sum_{ \la \in \La}|w_{\lambda}|^{2} = \rho^{2} \Bigr\}
\end{equation}
 and for any set $\mathcal{M}  \subseteq \dom(\Ko)$ define 
$ \epsilon(\mathcal{M},\delta)  \coloneqq  \sup\{\norm{x}\mid x\in \mathcal{M} \wedge \norm{\Ko x}\leq\delta\}$.

We have that $\epsilon(\mathcal{M}, \delta)$ is a lower bound for the worst case reconstruction error  \begin{equation} \label{eq:worst}
	   E(\mathcal{M},\delta, \Ro)
	   \coloneqq 
	   \sup\{ \snorm{\Ro \data-x} \mid x\in \mathcal{M} \wedge \data^\delta \in \Y \wedge \norm{\Ko x -\data^\delta}\leq \delta\}  \,,
\end{equation}
for an arbitrary mapping  $\Ro \colon \Y \to \X$ (in this context called reconstruction method) with $\Ro(0)  = 0$; see  \cite{engl1996regularization}.  A family $(\Ro^{\delta})_{\delta>0}$ of reconstruction methods is called order optimal on $\mathcal{M}$, if  
 $E(\mathcal{M},\delta, \Ro^{\delta}) \leq c \, \epsilon(\mathcal{M}, \delta)$   for all sufficiently small $\delta$ and some constant $c>0$. To show that the convergence rate of  Theorem~\ref{thm:rates} is order optimal therefore amounts to bound $ \epsilon(U_{\mu,\rho},\delta)$. 

\begin{theorem}\label{thm:opt}
Let $(\uo, \vo, \kao)$ be a DFD of $\Ko$ such that $\uo$ has a biorthogonal sequence $\buo$  and $0$ is an accumulation point of $\kao$. Then for the source sets $U_{\mu,\rho}$ defined by \eqref{eq:sourceset} and some sequence $(\delta_n)_{n\in\N}$ converging to $0$, we have
  \begin{equation*}
    { \epsilon(U_{\mu,\rho},\delta_n})  \geq \sqrt{\frac{B_{\vo}}{A_{\uo}}}   \,  \delta_n^{\tfrac{2  \mu}{ 2 \mu+1}}\rho^{\tfrac1{ 2 \mu+1}} \,.
  \end{equation*}
In particular, under the  assumptions of Theorem~\ref{thm:rates}, the  family $(\Freg_{\al^*(\delta,  \cdot)})_{\delta >0}$ is an order optimal reconstruction method for  the source set  $U_{\mu,\rho}$.
\end{theorem}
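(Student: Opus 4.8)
The plan is to fix an arbitrary $x \in U_{\mu,\rho}$ with $\norm{\Ko x} \le \delta$ and to bound $\norm{x}$ by $\sqrt{B_\vo/A_\uo}\, \delta^{\mu/(\mu+1)} \rho^{1/(\mu+1)}$; the estimate on $\epsilon(U_{\mu,\rho},\delta)$ then follows by taking the supremum over all such $x$. Since the coefficients $\inner{x}{u_\la}$ and the norm $\norm{\Ko x}$ depend only on the orthogonal projection of $x$ onto $(\ker\Ko)^\perp$, whereas an additional component in $\ker\Ko$ would only enlarge $\norm{x}$, the supremum defining $\epsilon$ is attained within $(\ker\Ko)^\perp$, and I take $x \in (\ker\Ko)^\perp$. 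The argument rests on three ingredients phrased through the coefficient sequence $(w_\la)$: a lower bound for $\norm{x}$ coming from $\uo$, an upper bound for a weighted energy coming from $\vo$ together with the quasi-singular relation, and an interpolation inequality linking them.

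First I use that $\uo$ is a Riesz basis, hence a frame with lower bound $A_\uo$, of $(\ker\Ko)^\perp$, which together with the source condition $\inner{x}{u_\la} = \ka_\la^\mu w_\la$ gives
\begin{equation*}
A_\uo \norm{x}^2 \le \sum_{\la \in \La} \abs{\inner{x}{u_\la}}^2 = \sum_{\la \in \La} \ka_\la^{2\mu}\abs{w_\la}^2 .
\end{equation*}
Next, the quasi-singular relation \eqref{eq:qsr} yields $\inner{\Ko x}{v_\la} = \inner{x}{\Ko^* v_\la} = \ka_\la \inner{x}{u_\la} = \ka_\la^{\mu+1} w_\la$, and since $\Ko x \in \overline{\ran\Ko}$, where $\vo$ is a frame with upper bound $B_\vo$, I obtain
\begin{equation*}
\sum_{\la \in \La} \ka_\la^{2(\mu+1)}\abs{w_\la}^2 = \sum_{\la \in \La} \abs{\inner{\Ko x}{v_\la}}^2 \le B_\vo \norm{\Ko x}^2 \le B_\vo \delta^2 .
\end{equation*}

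The crux, and the step I expect to be the main obstacle, is an interpolation (moment) inequality that controls the intermediate exponent $2\mu$ by the endpoint exponents $0$ and $2(\mu+1)$. Writing $\ka_\la^{2\mu}\abs{w_\la}^2 = \bigl(\ka_\la^{2(\mu+1)}\abs{w_\la}^2\bigr)^{\mu/(\mu+1)}\bigl(\abs{w_\la}^2\bigr)^{1/(\mu+1)}$ and applying H\"older's inequality with conjugate exponents $(\mu+1)/\mu$ and $\mu+1$ gives
\begin{equation*}
\sum_{\la \in \La} \ka_\la^{2\mu}\abs{w_\la}^2 \le \Bigl(\sum_{\la \in \La}\ka_\la^{2(\mu+1)}\abs{w_\la}^2\Bigr)^{\frac{\mu}{\mu+1}} \Bigl(\sum_{\la \in \La}\abs{w_\la}^2\Bigr)^{\frac{1}{\mu+1}} \le (B_\vo\delta^2)^{\frac{\mu}{\mu+1}}(\rho^2)^{\frac{1}{\mu+1}} .
\end{equation*}
Combining this with the lower bound from $\uo$ and taking square roots yields $\norm{x} \le A_\uo^{-1/2} B_\vo^{\mu/(2(\mu+1))}\delta^{\mu/(\mu+1)}\rho^{1/(\mu+1)}$ for every admissible $x$, and passing to the supremum gives the claimed bound on $\epsilon(U_{\mu,\rho},\delta)$. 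The only delicate point is the bookkeeping of the exponent of $B_\vo$, where the stated constant $\sqrt{B_\vo/A_\uo}$ is recovered by estimating $B_\vo^{\mu/(\mu+1)} \le B_\vo$.

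For the order-optimality statement I combine this estimate with Theorem~\ref{thm:rates} and the benchmark property recorded before the theorem, namely $\epsilon(U_{\mu,\rho},\delta) \le E(U_{\mu,\rho},\delta,\Ro)$ for every reconstruction method $\Ro$. Theorem~\ref{thm:rates} shows $E(U_{\mu,\rho},\delta,\Fo_{\al^*}) \le c_1 \delta^{\mu/(\mu+1)}\rho^{1/(\mu+1)}$, so that the worst-case error of $\Fo_{\al^*}$ has exactly the order of the benchmark just bounded. To make the inequality $E(U_{\mu,\rho},\delta,\Fo_{\al^*}) \le c\,\epsilon(U_{\mu,\rho},\delta)$ rigorous one needs the matching reverse estimate $\epsilon(U_{\mu,\rho},\delta) \ge c_2 \delta^{\mu/(\mu+1)}\rho^{1/(\mu+1)}$; this is supplied by a near-extremal $x \in U_{\mu,\rho}$ whose weights $(w_\la)$ are concentrated on indices with $\ka_\la \asymp (\delta/\rho)^{1/(\mu+1)}$, which turns the interpolation inequality into an essential equality and which exist because $\Ko$ is compact so that the $\ka_\la$ accumulate at $0$. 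Combining the two estimates then yields order optimality of $(\Fo_{\al^*(\delta,\cdot)})_{\delta>0}$ on $U_{\mu,\rho}$.
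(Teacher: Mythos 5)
Your H\"older-interpolation proof of the displayed inequality is correct in substance, but it is not the paper's argument --- in fact, the paper never proves the upper bound that the theorem states. The paper's proof runs in the \emph{opposite} direction: it constructs the explicit elements $x_\nu \coloneqq \rho\ka_\nu^\mu \bar u_\nu$ and noise levels $\delta_\nu \coloneqq \rho\ka_\nu^{\mu+1}/\sqrt{A_\vo}$, and establishes the lower bound $\epsilon(U_{\mu,\rho},\delta_\nu)\geq \sqrt{A_{\vo}/B_{\uo}}\,\delta_\nu^{\mu/(\mu+1)}\rho^{1/(\mu+1)}$ along this null sequence of noise levels. As you yourself observe, it is this lower bound --- not the stated upper bound --- that the order-optimality claim needs, since Theorem~\ref{thm:rates} bounds $E(U_{\mu,\rho},\delta,\Fo_{\al^*})$ above by a multiple of $\delta^{\mu/(\mu+1)}\rho^{1/(\mu+1)}$ and order optimality means $E\leq c\,\epsilon$. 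So the ``$\leq$'' in the theorem is inconsistent with the paper's own proof, and your argument is the natural complement to it: taken together, the two estimates show $\epsilon(U_{\mu,\rho},\delta)\asymp \delta^{\mu/(\mu+1)}\rho^{1/(\mu+1)}$.

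The genuine gap is precisely in the part your proposal leaves as a sketch, and the sketch rests on the wrong hypothesis. You assert that a near-extremal $x\in U_{\mu,\rho}$ with weights concentrated on indices with $\ka_\la \asymp (\delta/\rho)^{1/(\mu+1)}$ exists ``because $\Ko$ is compact so that the $\ka_\la$ accumulate at $0$.'' Compactness only supplies indices with small $\ka_\la$; it does not supply an element of $U_{\mu,\rho}$ whose coefficient sequence is supported there. One needs $x$ with \emph{prescribed} analysis coefficients $\inner{x}{u_\la}=\rho\ka_\nu^\mu\delta_{\la,\nu}$, and for an overcomplete frame $\uo$ the range of the analysis operator $\TT_\uo$ is a proper closed subspace of $\ell^2(\La)$, so such an $x$ need not exist. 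This is exactly where the biorthogonality hypothesis enters (and is why the theorem assumes it): with $\inner{u_\la}{\bar u_\nu}=\delta_{\la,\nu}$ one simply takes $x_\nu=\rho\ka_\nu^\mu\bar u_\nu$; then the upper frame bound of $\uo$ gives $\norm{x_\nu}\geq \rho\ka_\nu^\mu/\sqrt{B_\uo}$, while the quasi-singular relation and the lower frame bound of $\vo$ give $\norm{\Ko x_\nu}\leq \rho\ka_\nu^{\mu+1}/\sqrt{A_\vo}=\delta_\nu$, yielding the lower bound (only along the sequence $\delta_\nu\to 0$, a restriction the paper's own proof shares). Two smaller points in your first half: your argument actually yields the constant $A_\uo^{-1/2}B_\vo^{\mu/(2(\mu+1))}$, and the reduction $B_\vo^{\mu/(\mu+1)}\leq B_\vo$ requires $B_\vo\geq 1$; and your kernel remark is backwards --- a component in $\ker\Ko$ enlarges $\norm{x}$ without affecting either constraint, so if such components were admitted the supremum would be $+\infty$, and one must instead read $U_{\mu,\rho}\subseteq(\ker\Ko)^\perp$ as part of the definition.
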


\begin{proof}
After extracting a subsequence we assume without loss of generality that $\Lambda = \N$ and that $\kao$ converges to $0$. For any $\nu\in\N$ set $x_{\nu}\coloneqq \rho \kappa_\nu^{ 2 \mu} \bar{u}_\nu$ such that
\[
  \scp{x_{\nu}}{u_{\lambda}} = \kappa_{\lambda}^{2 \mu}w_{\lambda},\quad w_{\lambda} =
  \begin{cases}
    \rho,\ & \text{if}\ \lambda=\nu\\
    0, & \text{else} \,.
  \end{cases}
\]
By definition we have $\norm{w}_{2} = \rho$ and $x_\nu \in U_{\mu,\rho}$.
If we consider the decreasing null-sequence of noise levels $\delta_\nu = \rho\kappa_{\nu}^{2 \mu+1} / \sqrt{A_\vo}$ we get
\begin{align*}
  \norm{x_\nu}^{2} & \geq \frac{1}{B_{\uo}}\sum_{\lambda\in\La}|\scp{u_{\lambda}}{x_\nu}|^{2} = \frac{1}{B_{\uo}}\kappa_{\nu}^{4 \mu}\rho^{2} = A_\vo^{2 \mu/(2\mu+1)} \frac{1}{B_{\uo}}\left(\delta_\nu^{2  \mu/(2\mu+1)}\rho^{1/({2\mu}+1)}\right)^{2}
\end{align*}
and
\begin{equation*}
  \norm{\Ko x_\nu}^{2} \leq \frac{1}{A_{\vo}}\sum_{\lambda\in\La}|\scp{v_{\lambda}}{\Ko x_\nu}|^{2}
                =  \frac{1}{A_{\vo}}\sum_{\lambda\in\La}\kappa_{\lambda}^{2}|\scp{u_{\lambda}}{x_\nu}|^{2}
                = \frac{1}{A_{\vo}}\kappa_{\nu}^{2({2\mu}+1)}\rho^{2} = \delta_\nu^{2} \,.
\end{equation*}
Thus, $\norm{\Ko x_\nu}\leq \delta_\nu$ and   $ 
  \epsilon(U_{\mu,\rho},\delta_\nu)\geq \norm{x_\nu}\geq \sqrt{{B_{\uo}}/{A_{\vo}}}
  \, \delta_\nu^{2\mu/(2\mu+1)}\rho^{1/(2\mu+1)}  $. \end{proof}

Note that if $\kao$ does not accumulate at zero, then $\Ko^\plus$ is bounded (see Theorem \ref{thm:char}). In this case the inverse problem is well posed and 
\begin{align*}
\begin{aligned}
    \epsilon(U_{\mu,\rho},\delta) &= \sup\{\|x\|\mid x\in U_{\mu,\rho} \wedge \|\Ko x \| \leq \delta\} \\
    &= \sup \{\| \Ko^\plus y\| \mid y \in \Ko(U_{\mu,\rho}) \wedge \|y\| \leq \delta \} \\
    &= \| \Ko^\plus \|.
\end{aligned}
\end{align*}
This reflects that in the well-posed case, where $\Ko^\plus$ is bounded the optimal convergence rate is $\mathcal{O}(\delta)$ independent of particular prior information. In the ill-posed case according to Theorem \ref{thm:opt} this rate is not achievable.

\section{Application to X-ray tomography}
\label{sec:radon}

In this section we  apply  the concept of  filtered DFDs  to X-ray tomography as a prime  example of an inverse problem in medical image reconstruction. In two spatial dimensions, X-ray tomography can be  modeled by the 2D Radon transform.
In this section we study filtered DFDs   for the Radon transform  on $L^2(\R^2)$.   
Throughout this section,  the Fourier transform  of a function $f \in L^1(\R^n)$ is defined by $ \fourier f(\xi) = \int_{\R^n} f(x) e^{-i \inner{\xi}{x}} \dx$  and extended to functions in $L^2 (\R^n)$ by continuity. Its inverse transform is denoted by $\fourier^{-1} $. For functions $g$ defined on $\sph^1 \times \R$  we write $\fourier_2 g$ for the Fourier transform in the second argument.    

\subsection{The Radon transform  on $L^2(\R^2)$}

Wavelet frames are naturally defined on $L^2(\R^2)$. Therefore we will study the Radon transform as an operator on  $L^2(\R^2)$, where it is known to  be unbounded, closed, and densely defined. See~\cite{smith1977practical} for further background. In this  subsection we collect main ingredients for  constructing DFDs and filtered DFDs for the Radon transform. 

\paragraph{Radon transform:} 
Let   $L^2_0(\R^2) \coloneqq \set{f \in L^2(\R^2) \mid  \supp (f) \text{ compact} }$ denote the space of all square integrable functions on $\R^2$ that vanish outside a bounded domain. The 2D Radon transform $\Ro f$ of $f \in   L^2_0(\R^2)$ is defined by
\begin{equation} \label{eq:Rf}
	\forall (\theta,s) \in \sph^1 \times \R \colon \quad \Ro f (\theta,s) = \int_\R f(s \theta + t \theta^\perp) \dt \,.
\end{equation}
The  value $\Ro f (\theta,s)$ is the integral of $f$ over the affine line with  normal  vector $\theta \in \sph^1$ and signed distance $s \in \R$. Given  $f \in   L^2_0(\R^2)$ these integrals are well defined  for almost all  $(\theta,s) $ and yield  an element in  $L^2(\sph^1 \times \R)$ . 

The  Radon transform  can and is extended  to a  densely defined closed  operator 
$\Ro \colon \dom(\Ro) \subseteq L^2(\R^2) \to L^2(\sph^1 \times \R) $ with domain $\dom(\Ro) \coloneqq \{ f \in L^2(\R^2) \mid \norm{\edot}^{-1/2} \fourier f \in L^2(\R^2) \}$. Note that the form  \eqref{eq:Rf} of $\Ro f (\theta,s) $ as line integral does not hold for all $f \in  \dom(\Ro)$. 

\paragraph{Adjoint Radon transform:} The adjoint  $\Ro^* \colon \dom(\Ro^*) \subseteq L^2(\sph^1 \times \R) \to L^2(\R^2)$ 
of the Radon transform has  domain  $\dom(\Ro^*) \coloneqq    \{g \in L^2(\sph^1 \times \R) \mid \abs{\sigma}^{-1/2} \fourier_2 g  \in L^2( \sph^1 \times \R ) \}$, where $\sigma$ is the second argument of $\mathcal{F}_2 g$. One verifies that $\dom(\Ro^*)$ consists of all $g \in L^2(\sph^1 \times \R)$ such that $
	\Ro^\sharp g (x)  \coloneqq  \int_{\mathbb{S}} g(\theta,\inner{x}{\theta}) \dtheta 
$ gives a square integrable function in which case  $\Ro^* g=  \Ro^\sharp g$.   Operator $\Ro^\sharp$ is known as backprojection operator.

\paragraph{Fourier slice theorem:}
The Fourier slice theorem for  $f \in \dom(\Ro)$ reads     
\begin{equation} \label{eq:slice1}
	\fourier_2 \Ro f( \theta, \sigma) = \fourier f(\sigma \theta)  \quad \text{ for a.e.  $( \theta, \sigma) \in  \sph^1 \times \R$ }  \,.
\end{equation}
The Fourier slice identity is  commonly stated  for functions $f \in L^1(\R^2) \supseteq  L^2_0(\R^2)$ in which case $\fourier f$  is a continuous function and   \eqref{eq:slice1} holds point-wise as an easy consequence of the definition of the Radon and Fourier transforms.  Let us verify that \eqref{eq:slice1} indeed also holds on $\dom(\Ro)$. For that purpose note  that  $f \in  \dom (\Ro)$ iff  $\abs{\fourier f}^2$ and $\norm{\edot}^{-1}  \abs{\fourier f}^2$ are integrable. The latter property together with a change of variable  and Fubinis theorem shows  $\int_{\R^2}  \abs{\fourier f(\xi)}^2  \, \norm{\xi}^{-1} \dxi = \int_\mathbb{S} \int_\R \abs{ \fourier f(\sigma \theta)}^2 \dsigma \dtheta $.
Hence the right hand side in \eqref{eq:slice1} is well defined  as an element  of  $L^2(\sph^1 \times \R)$. The same holds true for the left hand side $\fourier_2 \Ro f$. In order that \eqref{eq:slice1} holds true on  $ \dom (\Ro)$ one has to assure that $\fourier_2^{-1} [(\theta,\sigma) \mapsto \fourier f(\sigma \theta) ]$ defines a closed operator on $ \dom (\Ro)$ which is verified  in straight forward manner.

\paragraph{Normal operator:}
The normal  operator  $\Ro^* \Ro$ for the Radon transform is again  densely defined and closed  with  domain $\dom(\Ro^* \Ro)  =  \{f \in L^2(\R^2) \mid \norm{\edot}^{-1} \fourier f \in L^2(\R^2) \}$. The Fourier slice identity \eqref{eq:slice1} and Fubinis theorem  yield the  isometry property    
\begin{equation}\label{eq:slice}
\forall f,g \in \dom(\Ro) \colon  \quad \int_{\sph^1} \int_\R \Ro f(\theta,s) \overline{\Ro g(\theta,s)} \ds \dtheta = 2 \int_{\R^2} \frac{\fourier f(\xi)}{\norm{\xi}}  \; \overline{\fourier g(\xi)} \, \dxi \,.
\end{equation}
The left hand side in \eqref{eq:slice} is the $L^2$-inner product $\inner{\Ro f}{\Ro g}$ which  is equal to $\inner{\Ro^*\Ro f}{g}$ provided that  $ \Ro f \in \dom (\Ro^*)$, or equivalently that $f \in \dom (\Ro^*\Ro)$.  Therefore \eqref{eq:slice} gives the Fourier representation  $  \Ro^* \Ro f  =  2 \fourier^{-1}  ( \norm{\edot}^{-1} \fourier f )$.

\subsection{DFDs for the Radon transform} \label{sec:DFD_Radon}

We now study  DFDs  $(\uo, \vo, \kao)$  for the Radon transform on $L^2(\R^2)$. We first derive necessary properties for   $\vo$ and $\kao$  in the general case and subsequently derive the DFD for the  case that $\uo$ is a wavelet ONB.

\paragraph{Necessary conditions:}

Let $(\uo, \vo, \kao)$ be a DFD for $\Ro$ and assume $v_\la \in \ran(\Ro)$.  Then $v_\la  = \ka_\la \Ro \sigma_\la$ for some $\sigma_\la \in \dom (\Ro)$. By  \ref{D3}, $\Ro^* v_\la =   \kappa_\la  u_\la$ which shows that $u_\la = \Ro^*\Ro \sigma_\la$ and  $\sigma_\la \in \dom (\Ro^*\Ro)$. Equation \eqref{eq:slice} implies $\sigma_\la  = \fourier^{-1}  (  \norm{\edot}  \fourier u_\la ) / 2$  and  therefore 
\begin{equation} \label{eq:vradon}
	v_\la 
	=  \frac{\ka_\la}{2}  \, \Ro \fourier^{-1}  (\norm{\edot} \fourier u_\la)
	=  \frac{\ka_\la}{2}  \,  \Ro \riesz u_\la  \,,    
\end{equation}
where $\riesz  u \coloneqq \fourier^{-1}  (\norm{\edot}  \fourier u)$. 

Next assume that the frame $\uo$ has a multiscale structure  
\begin{equation} \label{eq:wavelets}
	\forall (j,k,\ell) \in \La = \Z \times  \Z^2 \times  L \colon 
	\quad u_{j,k,\ell}(x)  =  2^j u_{0,0,\ell}(2^j x -k) \,.
\end{equation}
Using  \eqref{eq:vradon},   \eqref{eq:slice} and the scaling and translation property of  $\fourier$ show
\begin{align*}
\norm{v_{j,k,\ell}}^2 &= 
\frac{\kappa_{j,k,\ell}^2}{(2\pi)^2} \int_{\sph^1}\int_{\R} \abs{\Ro \riesz  (u_{j,k,\ell})}(\theta, \sigma)^2  \dsigma \dtheta \\
&= 
\frac{\kappa_{j,k,\ell}^2}{(2\pi)^2}  
\int_{\R^2} \frac{ \abs{\norm{\xi}\fourier u_{j,k,\ell} (\xi)}^2 }{\norm{\xi}}  \dxi
\\ &= 
\frac{2^j \kappa_{j,k,\ell}^2 }{(2\pi)^2}  
\int_{\R^2} \frac{ \abs{\norm{\xi}\fourier u_{0,0,\ell} (\xi)}^2 }{\norm{\xi}}  \dxi
\\&= \frac{2^j  \kappa_{j,k,\ell}^2 }{\kappa_{0,0,\ell}^2}   \norm{v_{0,0,\ell}}^2.
\end{align*}
Assuming the frame elements to be bounded away from zero (as it is, for example,  in the case of Riesz bases) this  implies  that the quasi-singular values satisfy  $\kappa_{j,k,\ell} \asymp 2^{-j/2}$.  

The  considerations above show how  to construct a DFD starting with a frame $\uo$   of the form \eqref{eq:wavelets}.  That such a construction actually results  in a DFD  in the case of wavelet ONB has been first shown in the seminal work of Donoho \cite{donoho95nonlinear} and is outlined below.

\paragraph{Wavelet vaguelette decomposition:}

Now let    $\uo = (u_\la)_{\la \in \La}$ be a 2D (tensor product) wavelet ONB for $L^2(\R^2)$ of the  form \eqref{eq:wavelets} where  $\la \in \La = \Z \times \Z^2 \times \{1,2,3\}$ consists of a triples $(j,k,\ell) $, where $j \in \Z$ is the scale index, $k = (k_1,k_2) \in \Z^2$ is the shift index and $\ell \in \{1,2,3\}$ indicates the chosen mother wavelet (horizontal, vertical or diagonal).

\begin{theorem}[Wavelet-vaguelette decomposition \cite{donoho95nonlinear}] \label{thm:wvd}
Let  $\uo \in L^2(\R^2)^\La$  with  $\La = \Z \times \Z^2 \times \{1,2,3\}$ be a 2D wavelet ONB  of the form \eqref{eq:wavelets} such that $u_{0,0,\ell}$ has compact support  and $\norm{\edot} \, \fourier u_{0,0,\ell}  \in L^2(\R^2)$ for $\ell=1,2,3$. Define $\vo$ by  \eqref{eq:vradon} and set $\kappa_\la \coloneqq  2^{-j/2}$. Then $(\uo, \vo, \kao)$ is a diagonal frame decomposition of $\Ro$ with a Riesz basis $\vo$ of  $\overline{\ran \Ro}$. 
\end{theorem}

\begin{proof}
Following the construction of the previous paragraph, the quasi-singular value relations  \ref{D3} are satisfied. It remains to  verify that  $ \vo$ forms a frame of  $\overline{\ran \Ro}$.  For the proof we refer to the original work  of Donoho \cite{donoho95nonlinear}.   He used wavelet-like functions, so-called vaguelettes, which were first introduced by Meyer, for his proof. Therefore he called this particular DFD the wavelet-vaguelette decomposition  (WVD).
\end{proof}

Inspired by the WVD related frame decompositions  for the Radon transform  have been derived where $\uo$ is a curvelet  \cite{candes2002recovering} or a shearlet frame \cite{colonna2010radon}.  

\paragraph{Constructing frame coefficients:}

An essential  ingredient in the actual  implementation of the  filtered DFD, is the efficient computation of the frame  coefficients $\inner{g}{v_{j,k,\ell}} $. For that purpose  we make use of the  explicit expression  \eqref{eq:vradon} which implies    
\begin{equation}
	\inner{g}{v_\la }
	= \frac{\kappa_\la}{2}\inner{g}{ \Ro \riesz u_\la} 
	= \frac{\kappa_\la}{2}\inner{\riesz \Ro^\sharp g}{u_\la}  \,.
\end{equation}
Here $\riesz \Ro^\sharp$ is the filtered backprojection (FBP) inversion formula for the Radon transform.   Since the wavelet transform as well as $\riesz \Ro^\sharp$  can be computed efficiently, this gives also an efficient algorithm for evaluating the coefficients $\inner{g}{v_\la }$.

\begin{figure}[htb!]
\centering
\begin{tikzpicture}[x=1cm, y=1cm, font=\small]
	\centering
	\draw (0,0) node {\includegraphics [scale=0.4]{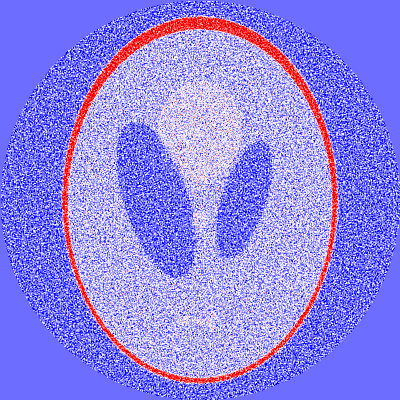}};
	\draw (4.4,0) node {\includegraphics [scale=0.4]{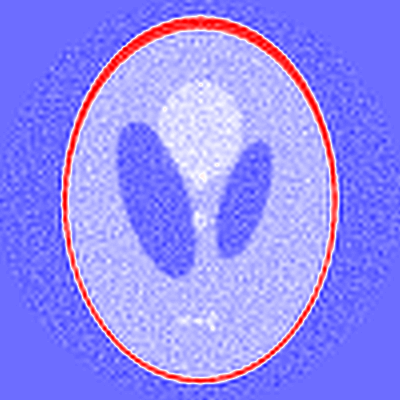}};
	\draw (8.8,0) node {\includegraphics [scale=0.4]{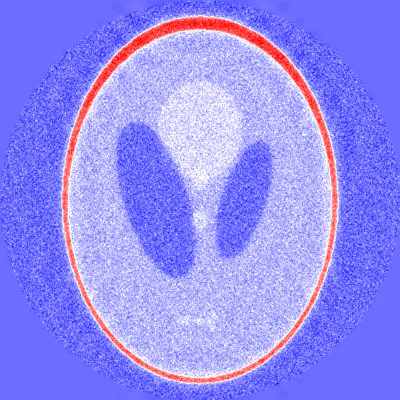}};
	\draw (11.6,0) node {\includegraphics [scale=0.43]{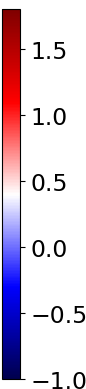}};
\end{tikzpicture}
\caption{Reconstructions using FBP (left), truncated DFD (middle), and  Tikhonv-filtered DFD (right) both with $\al = 0.15^2$.} \label{figure}
\end{figure}

\subsection{Numerical results}

Using the  WVD  $(\uo, \vo, \kao)$ as in Theorem \ref{thm:wvd} as DFD together with the regularizing filters of Subsection~\ref{sec:regfil} we obtain the following two filtered DFD reconstructions  
\begin{align} \label{eq:twvd} 
\Freg_\al^{(1)} g =& \sum_{2^{-j} \geq \al} \inner{\riesz\Ro^\sharp   g}{u_{j,k,\ell}} \,u_{j,k,\ell}  \\ \label{eq:tikwvd}
\Freg_\al^{(2)} g  =& 
 \sum_{j,k,\ell}\frac{2^{-j/2}}{2^{-j}+\al}  \inner{ \riesz \Ro^\sharp  g}{u_{j,k,\ell}}\, u_{j,k,\ell} \, .
\end{align}
We refer to \eqref{eq:twvd}  as  truncated WVD  and to \eqref{eq:tikwvd} as Tikhonov-filtered WVD. All ingredients for evaluating  \eqref{eq:twvd}, \eqref{eq:tikwvd} can be implemented in a straight forward and efficient manner: The  FBP  inversion formula $\riesz \Ro^\sharp g$, the forward and inverse wavelet transform and the coefficient filtering.       

Figure~\ref{figure} shows  reconstructions of the Shepp Logan phantom applied to  Radon transform data with added Gaussian white noise using the FBP reconstruction, truncated WVD and Tikhonov-filtered WVD, respectively.  Table~\ref{table} displays the $\ell^2$-error, the peak-signal-to-noise ratio (PSNR), and the structural similarity index measure (SSIM) of all  reconstructions  for various regularization parameters.

\setlength{\arrayrulewidth}{0.5mm}
\renewcommand{\arraystretch}{1.9}
\setlength{\tabcolsep}{12pt}

\begin{table}[htb!]
\centering
\begin{tabular}{|c|c||c|c|c|}
\hline
Reconstruction method & Parameter & $\ell^2$-error & PSNR & SSIM \\
\hline \hline
FBP & & 0.110 & 63.698 & 0.314 \\
\hline
\multirow{2}{*}{ WVD truncated} & $\al=0.08^2$ & 0.109 & 63.765 & 0.315 \\ \cline{2-5} 
& $\al=0.15^2$ & 0.104 & \textcolor{red}{71.263} & 0.709\\ \cline{2-5}
& $\al=0.25^2$ & 0.223 & 68.426 & \textcolor{red}{0.765} \\ \cline{2-5}
\hline
\multirow{3}{*}{WVD Tikhonov} & $\al=0.08^2$ & \textcolor{red}{0.086} & 67.473 & 0.408 \\ \cline{2-5} 
& $\al=0.15^2$ & 0.125 & 69.75 & 0.573\\ \cline{2-5}
& $\al=0.25^2$ & 0.196 & 68.844 & 0.706 \\ \cline{2-5}  
\hline
\end{tabular}
\caption{Evaluation of reconstruction results using common quality measures. The best results are marked in red (lowest-$\ell^2$ error and highest PSNR and SSIM.)
}\label{table}
\end{table}

\section{Conclusion and outlook}
\label{sec:end}

In this work we analyzed the concept of diagonal frame decomposition (DFD) for the solution of linear inverse problems allowing potentially unbounded forward operators $\Ko$. A DFD for the operator $\Ko$ yields the explicit formula  $\Ko^\plus = (\TT_{\buo}^* \M_{1/\ka} \TT_\vo)|_{\dom(\Ko^\plus)}$ for the Moore-Penrose inverse.  In the ill-posed case, the Moore-Penrose generalized inverse  $\Ko^\plus$ is unbounded as well as is the sequence $1/\ka$. We showed that replacing the $1/\kappa_\la$ by a regularized filter (Definition \ref{def:refF}) applied to the quasi-singular values  $\kappa_\la$ results in a regularization method (Theorem \ref{thm:conv}). As another main result we derived  convergence rates for filtered DFD in Theorem \ref{thm:rates}. By noting that the DFD reduced to the SVD in the case of orthogonal basis, we see that our results extend  convergence and convergence rates results of filter based SVD regularization \cite{engl1996regularization,groetsch1984theory} to the DFD case. We applied  our theory to the inversion of the Radon transform  by filtered DFD as practical application. The Radon transform is unbounded as an operator on  $L^2(\R^2)$ highlighting benefits of including such operators in our theoretical analysis.

One advantage of filtered DFD regularization over variational regularization methods is their explicit form.  Compared to SVD based regularization, benefits are that  a DFD may be available even when no SVD is known or has to be computed numerically.  Moreover,  the associated analysis and synthesis operations can often be implemented efficiently for the DFD.  The use of the DFD is of practical relevance  as frames such as wavelets or curvelets have better approximation capabilities  for typical images to be reconstructed \cite{candes2002recovering} than singular systems. In order to fully exploit such properties a main aspect of future research is to extend the  presented convergence analysis to non-linear filters. As a first step in this direction see the work \cite{frikel2019sparse} where a convergence analysis  is presented using soft-thresholding defining a non-linear filtered DFD.

\section*{Acknowledgment}

The work of AE and MH has been supported by the Austrian Science Fund (FWF), project P 30747-N32. We thank the referees and the editor for the valuable comments on the initial submission that helped to significantly improve our manuscript.

\bibliography{dfd.bib}

\end{document}